\providecommand\@dotsep{5}
\def\listtodoname{TODOS: Not Done (Red), Cite (Orange), Blue (Notation)}
\def\listoftodos{\@starttoc{tdo}\listtodoname}
\newcommand{\bburl}[1]{\textcolor{blue}{\url{#1}}}
\theoremstyle{plain} 
\numberwithin{equation}{section}
\newtheorem{thm}{Theorem}[section]
\newtheorem{theorem}[thm]{Theorem}
\newtheorem{prop}[thm]{Proposition}
\newtheorem{lemma}[thm]{Lemma}
\newtheorem{cor}[thm]{Corollary}
\theoremstyle{definition}
\newtheorem{defn}[thm]{Definition}
\newtheorem{conjecture}[thm]{Conjecture}
\theoremstyle{remark}
\newcommand{\floor}[1]{\left\lfloor#1\right\rfloor}
\newcommand{\abs}[1]{\left|#1\right|}
\renewcommand{\bar}{\overline}
\newcommand{\E}{\mathbb E}
\newcommand{\N}{\mathbb N}
\newcommand{\R}{\mathbb R}
\newcommand{\Z}{\mathbb Z}
\date{\today}
\thanks{This work was supported by NSF Grants DMS1561945 and DMS1659037. We thank the participants of the 2021 Williams SMALL REU for constructive comments. Thanks also to Leo Goldmakher for translating Wirsing's paper from the original German and for helpful feedback throughout.}
\title{Irreducibility Over the Max-Min Semiring}
\author[B. Baily]{Benjamin Baily}
\email{\textcolor{blue}{\href{mailto:youremail@school.edu}{bmb2@williams.edu}}}
\address{Department of Mathematics and Statistics, Williams College, Williamstown, MA 01267}
\author[J. Dell]{Justine Dell}
\email{\textcolor{blue}{\href{mailto:jdell@haverford.edu}{jdell@haverford.edu}}}
\address{Department of Mathematics and Statistics, Haverford College, Haverford, PA 19041}
\author[H. L. Fleischmann]{Henry L. Fleischmann}
\email{\textcolor{blue}{\href{mailto:henryfl@umich.edu}{henryfl@umich.edu}}}
\address{Department of Mathematics, University of Michigan, Ann Arbor, MI 48109}
\author[F. Jackson]{Faye Jackson}
\email{\textcolor{blue}{\href{mailto:alephnil@umich.edu}{alephnil@umich.edu}}}
\address{Department of Mathematics, University of Michigan, Ann Arbor, MI 48109}
\author[S. J. Miller]{Steven J. Miller}
\email{\textcolor{blue}{\href{mailto:sjm1@williams.edu}{sjm1@williams.edu}},  \textcolor{blue}{\href{Steven.Miller.MC.96@aya.yale.edu}{Steven.Miller.MC.96@aya.yale.edu}}}
\address{Department of Mathematics and Statistics, Williams College, Williamstown, MA 01267}
\author[E. Pesikoff]{Ethan Pesikoff}
\email{\textcolor{blue}{\href{mailto:ethan.pesikoff@yale.edu}{ethan.pesikoff@yale.edu}}}
\address{Department of Mathematics,
Yale University,
New Haven, CT 06520}
\author[L. Reifenberg]{Luke Reifenberg}
\email{\textcolor{blue}{\href{mailto:lreifenb@nd.edu}{lreifenb@nd.edu}}}
\address{Department of Mathematics, University of Notre Dame, Notre Dame, IN 46556}
\date{\today}
\subjclass{11B30, 11R09, 15A80}
\begin{document}

\begin{abstract}
    For sets $A, B\subset \N$, their sumset is $A + B := \{a+b: a\in A, b\in B\}$. If we cannot write a set $C$ as $C = A+B$ with $|A|, |B|\geq 2$, then we say that $C$ is \textit{irreducible}. The question of whether a given set $C$ is irreducible arises naturally in additive combinatorics. Equivalently, we can formulate this question as one about the irreducibility of boolean polynomials, which has been discussed in previous work by K. H. Kim and F. W. Roush (2005) and Y. Shitov (2014). We prove results about the irreducibility of polynomials and power series over the max-min semiring, a natural generalization of the boolean polynomials.
    
    We use combinatorial and probabilistic methods to prove that almost all polynomials are irreducible over the max-min semiring, generalizing work of Y. Shitov (2014) and proving a 2011 conjecture by D. L. Applegate, M. Le Brun, and N. J. A. Sloane. Furthermore, we use measure-theoretic methods and apply Borel's result on normal numbers to prove that almost all power series are asymptotically irreducible over the max-min semiring. This result generalizes work of E. Wirsing (1953).
\end{abstract}

\maketitle
\tableofcontents

\section{Introduction}\label{sec:intro}
The max-min semiring is defined as $\mathcal N = (\N\cup \{\infty\}, \oplus, \otimes)$, where $a\oplus b = \max(a,b), a\otimes b = \min(a,b)$. Previous work (\cite{KR05}, \cite{Shi14}) has discussed the factorization of polynomials over the \textit{Boolean Semiring}, that is, the subsemiring $\mathcal B_2 = \{0,1\}$. In this restricted case, the two questions we hope to settle in general have already been resolved. 

\begin{defn}
Let $f\in \mathcal N[[x]]$. If $f = gh$ implies that either $g$ or $h$ is a monomial, then $f$ is \textit{irreducible}.
\end{defn}

\begin{theorem}[Shitov, 2014]\label{thm:Shitov}
As $n\to \infty$, the proportion of degree $n$ polynomials in $\mathcal B_2[x]$ which are irreducible tends to 1.
\end{theorem}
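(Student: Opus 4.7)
The plan is to reformulate the problem in terms of sumsets. Each polynomial $f \in \mathcal{B}_2[x]$ of degree $n$ is determined by its support $A := \supp(f) \subseteq \{0, 1, \dots, n\}$ with $n \in A$, yielding $2^n$ polynomials in total. Because $\supp(f \otimes g) = \supp(f) + \supp(g)$ and monomials are precisely those $f$ with $|\supp(f)| = 1$, $f$ is reducible exactly when $A = B + C$ for some $B, C \subseteq \N$ with $|B|, |C| \geq 2$. The goal therefore becomes showing that $o(2^n)$ such reducible $A$ exist. After translating so that $\min B = \min C = 0$ (and restricting to $A$ with $0 \in A$, which costs only a factor of two in the proportion), every factorization satisfies $d_B + d_C = n$ with $d_B := \max B$ and $d_C := \max C$; by symmetry I may assume $d_B \leq n/2$.

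First I would handle $|B| = 2$, so $B = \{0, b\}$ for some $b \in \{1, \dots, n-1\}$. The equation $A = \{0,b\} + C = C \cup (C + b)$ is equivalent to the structural condition that no $a \in A$ is \emph{$b$-isolated}, meaning $a - b \notin A$ and $a + b \notin A$. Partitioning $A$ by residue modulo $b$, this condition within each residue class reduces to the $b = 1$ problem (binary strings with no isolated $1$), which by a transfer-matrix computation admits at most $O(\mu^{(n+1)/b})$ solutions for some explicit $\mu < 2$ (the dominant eigenvalue of the associated $3 \times 3$ matrix). Multiplying across $b$ residue classes shows that the number of $A$ with no $b$-isolated element is $O(\mu^n)$, \emph{uniformly in $b$}. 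Summing over $b \in \{1, \dots, n-1\}$ yields at most $O(n \mu^n) = o(2^n)$ reducible supports from $|B|=2$ factorizations.

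For $|B| \geq 3$ I would generalize the structural condition: $A = B + C$ forces that for every $a \in A$ there exists $b \in B$ with $a - b + B \subseteq A$. For each fixed $B$, a similar transfer-matrix or entropy argument yields at most $O(\mu_B^n)$ admissible $A$, with $\mu_B \leq \mu < 2$ (and typically smaller as $|B|$ grows). Combined with the at most $2^{d_B - 1}$ choices of $B$ of each diameter $d_B$, a careful accounting over $d_B \leq n/2$ should give an $o(2^n)$ contribution. The complementary regime $d_B > n/2$ (which arises after swapping $B$ and $C$) is handled by direct enumeration of the smaller factor $C$, yielding $\sum_{d_B > n/2} 2^{n - d_B - 1} = O(2^{n/2})$ such supports.

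\emph{The main obstacle} I anticipate is obtaining the exponential savings uniformly in the union bound for the $|B| \geq 3$ case at intermediate diameters $d_B \asymp \sqrt{n}$. One cannot simply reduce to the $|B|=2$ case: there exist reducible supports (e.g., $A = \{0,2,3\} + \{0,4,7\} = \{0,2,3,4,6,7,9,10\}$) admitting no factorization with a size-two factor. The number of candidate $B$ with $\max B = d_B$ grows like $2^{d_B - 1}$, while the per-$B$ bound $O(\mu_B^n)$ on valid $A$ can have the product overshoot $2^n$ unless the $B$-pattern condition is exploited more globally---e.g., by aggregating the constraint over overlapping choices of $B$, or by deploying a sharper concentration inequality adapted to sumset structure. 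Pushing this step through cleanly is what makes Shitov's theorem nontrivial.
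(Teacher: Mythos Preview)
Your proposal has a genuine gap, and it is precisely the one you flag yourself. The $|B|=2$ case is fine: the ``no $b$-isolated element'' characterization is correct, and the transfer-matrix bound gives an honest $O(n\mu^n)$ with $\mu<2$. But for $|B|\ge 3$ the outline does not close. A per-$B$ bound of the form $O(\mu_B^n)$, even with $\mu_B\le\mu<2$ uniformly, cannot survive the union bound over the $2^{d_B-1}$ choices of $B$ with $\max B=d_B$: summing gives on the order of $2^{n/2}\mu^n$, which exceeds $2^n$ as soon as $\mu>\sqrt{2}$. Your suggestion to ``aggregate the constraint over overlapping choices of $B$'' is exactly the missing idea, and nothing in the proposal indicates how to carry it out. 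The example you give already shows that reducing to a size-two factor is hopeless, so the $|B|\ge 3$ regime needs a genuinely new mechanism.

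The paper's route (specializing its \Cref{thm:gen-Shitov} to $b=2$, which follows Shitov) avoids this obstacle entirely by never fixing $B$ and counting admissible $A$. Instead it works on the pair side. Hoeffding's inequality forces, outside an exceptional set of size $o(2^n)$, that a random product $h=f\otimes g$ of degree $n-1$ has $|h_1|$ within $d/2$ of $n/2$ and simultaneously $|f_1|+|g_1|$ within $d/2$ of $(n{+}1)/2$; hence $|h_1|\le |f_1|+|g_1|+d$. The decisive input is then the combinatorial lemma (\Cref{cor:close-factors}/\Cref{lemma:close-factors-2}): the number of boolean pairs $(f,g)$ with $\deg f=k$, $\deg g=n-k$, and $|f\otimes g|\le |f|+|g|+d$ is at most $n^{2d+2}2^{k}$, and when both degrees vary one gets at most $n^{2d+3}2^{n/2}$ pairs. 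Choosing $d\asymp\sqrt{n}\log n$ makes the Hoeffding exceptional sets negligible while keeping $n^{2d+3}2^{n/2}=o(2^n)$. This lemma is doing exactly the ``global aggregation over $B$'' that your approach lacks: rather than bounding, for each $B$, how many $A$ it can produce, it bounds how many pairs $(f,g)$ can have small additive overlap $|fg|-|f|-|g|$, and Hoeffding guarantees that typical factorizations have small overlap.
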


This result answers a 2005 question of K. H. Kim and F. W. Roush. Remarkably, the proof uses only elementary combinatorics and probability. 

\begin{defn}
Let $f,g\in \mathcal N[[x]]$. If $f$ and $g$ differ in only finitely many coefficients, then we say $f\sim g$.
\end{defn}

\begin{defn}
Let $f\in \mathcal N[[x]]$. If $f \sim gh$ implies that either $g$ or $h$ is a monomial, then $f$ is \textit{asymptotically irreducible}.
\end{defn}

\begin{theorem}[Wirsing, 1953]\label{thm:Wirsing}
Almost every element of $\mathcal B_2[[x]]$ is asymptotically irreducible.
\end{theorem}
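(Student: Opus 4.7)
Since multiplication in $\mathcal{B}_2[[x]]$ corresponds to the sumset of supports, we may identify $f$ with $A := \mathrm{supp}(f) \subseteq \N$; then $f$ is asymptotically irreducible precisely when $A \triangle (B+C)$ is infinite for every $B, C \subseteq \N$ with $|B|, |C| \geq 2$. The natural measure on $\mathcal{B}_2[[x]]$ is the Bernoulli-$\tfrac{1}{2}$ product measure on coefficients. The plan is to invoke Borel's theorem on normal numbers: almost every binary sequence $(a_n)$ is \emph{normal}, meaning every finite binary pattern of length $k$ appears as a substring with asymptotic density $2^{-k}$. It then suffices to prove that every normal $A$ is asymptotically irreducible.

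Suppose for contradiction $A \triangle (B+C)$ is finite with $|B|, |C| \geq 2$. The key observation is that for any finite $B' \subseteq B$ of size $r$, every sufficiently large $c \in C$ satisfies $c + B' \subseteq A$, so $C$ sits (modulo a finite set) inside $\mathcal{E}_{B'} := \{n : n + B' \subseteq A\}$. Applying normality to windows of length $\mathrm{diam}(B') + 1$ shows $d(\mathcal{E}_{B'}) = 2^{-r}$, so $d(C) \leq 2^{-r}$ for every $r \leq |B|$; symmetrically $d(B) \leq 2^{-s}$ for every $s \leq |C|$.

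If $B$ is finite with $|B| = k$, then $A \subseteq \bigcup_{b \in B}(b + C)$ yields $d(A) \leq k \cdot d(C) \leq k \cdot 2^{-k}$. Since $d(A) = \tfrac{1}{2}$ by normality, this forces $k \leq 2$, hence $|B| = 2$. Writing $B = \{b_1, b_2\}$ with $d = b_2 - b_1 > 0$: every large $n \in A$ has $n = b_i + c$ for some $i \in \{1, 2\}$, so either $n + d \in A$ (if $i = 1$) or $n - d \in A$ (if $i = 2$). The pattern $(a_{n-d}, a_n, a_{n+d}) = (0, 1, 0)$ is thus forbidden for all sufficiently large $n$; yet normality (applied to a window of length $2d + 1$) gives this pattern asymptotic density $\tfrac{1}{8}$, the required contradiction. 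The case of $|C|$ finite is symmetric.

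The main obstacle is the remaining case in which both $B$ and $C$ are infinite, so $d(B) = d(C) = 0$ and the direct density count becomes vacuous. Here I would refine the pattern bound quantitatively: for the random $A$, Chernoff-type concentration sharpens $|\{n \leq N : n + B' \subseteq A\}|$ to $N 2^{-r} + O(\mathrm{error})$, and taking $r = r(N) \to \infty$ (with $B'$ the first $r$ elements of $B$) should produce $|C \cap [0,N]| = o(\sqrt{N})$, and symmetrically $|B \cap [0,N]| = o(\sqrt{N})$; then the counting $|A \cap [0,N]| \leq |B \cap [0,N]| \cdot |C \cap [0,N]|$ contradicts $|A \cap [0,N]| \sim N/2$. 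Controlling the error term when $B$ has unbounded diameter is the central technical difficulty; as a backup, one may invoke Kolmogorov's $0$--$1$ law, as asymptotic reducibility is a tail event, and use the finite-$B$ or finite-$C$ cases above to exhibit positive-measure irreducibility.
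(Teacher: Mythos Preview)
Your treatment of the case where one factor is finite is correct, though more circuitous than necessary: the paper observes directly that if $g$ is finite with $\deg g = d$, then the digit string $0^{d+1}10^{d+1}$ is forbidden in $h$, so $h$ is not normal. Your detour through $d(A)\le k\,2^{-k}$ to force $|B|=2$ works, but the same forbidden-pattern idea applies for any finite $B$ without the reduction.

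The genuine gap is the infinite--infinite case, and it is not merely a technical wrinkle. Your Chernoff idea requires concentration for $|\{n\le N : n+B'\subseteq A\}|$ uniformly over all finite $B'$ that might arise as an initial segment of some unknown $B$; since the diameter of $B'$ is unbounded, the union bound does not close, as you note. More fundamentally, your overall plan is to prove that \emph{every normal $A$ is asymptotically irreducible}. The paper does \emph{not} prove this and does not need it: it handles only part of the infinite--infinite case by normality, and the remaining part by a separate Borel--Cantelli counting argument that has nothing to do with normality of $h$. Your $0$--$1$ law backup is circular: knowing the finite-factor reducibles form a null set does not exhibit a positive-measure set of asymptotically irreducible $A$, since the infinite--infinite reducibles could a priori still have full measure.

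What you are missing is the paper's dichotomy on the \emph{density of the factors}. Fix a threshold (the paper uses $1/5$). If $\liminf_n \frac{|B\cap[0,n]|+|C\cap[0,n]|}{n}<\tfrac15$, then for infinitely many $n$ the number of pairs $(B\cap[0,n],C\cap[0,n])$ is at most $\sum_{k\le n/5}\binom{2n+2}{k}$, and each determines the first $n$ coefficients of $h$; this bound times $2^{-n}$ is summable, so Borel--Cantelli gives measure zero directly. If instead the $\liminf$ is $\ge\tfrac15$, then without loss of generality $\liminf_n |B\cap[0,n]|/n\ge\tfrac{1}{10}$. Now your own observation kicks in the right direction: each $s\in B$ forces the length-$r$ window of $h$ starting at $s$ to cover the first $k$ support positions of $C$, so that particular family of patterns occurs in $h$ with lower density at least $\tfrac{1}{10}$, whereas normality predicts density $2^{-k}$. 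Choosing $k$ with $2^{-k}<\tfrac{1}{10}$ yields the contradiction. The point is that one should use the density bound to show a pattern occurs \emph{too often} in $h$, rather than trying to squeeze both $|B\cap[0,N]|$ and $|C\cap[0,N]|$ below $\sqrt{N}$.
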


This proof is measure-theoretic and builds heavily off the work of Borel, in particular the result that almost every number is \textit{normal} (\cref{defn:normal}) in base 2. Interestingly, Wirsing and Shitov phrased these results in two different settings. Wirsing in fact writes that almost every set $A\subset \N$ is asymptotically irreducible.

\begin{defn}\label{defn:sumset}
Let $A, B\subset G$ for some group $G$. Then $A+B = \{a+b: a\in A, b\in B\}$.
\end{defn}

\begin{defn}\label{defn:irreducible}
Let $S\subset \N$. If $S = A+B$ implies either $A$ or $B$ is a singleton, then $S$ is irreducible. Similarly, if $S \sim A+B$ implies $A$ or $B$ is a monomial, then $S$ is asymptotically irreducible. Here, $\sim$ denotes difference in finitely many elements just like before.
\end{defn}

We restate Shitov and Wirsing's results in these terms: almost every finite subset of $\N$ is irreducible, and almost every subset of $\N$ is asymptotically irreducible. The semiring of sets under union and set addition is isomorphic to the semiring of boolean polynomials \cite{Gro19}, hence these two formulations are equivalent. 

\ \\

Our contribution is to generalize these results to the wider setting of the max-min semiring.

\begin{theorem}\label{thm:gen-Shitov}
Fix $b$, and set $\mathcal B_b = \{0, 1, \dots, b-1\}\subset \mathcal N$. Then as $n\to \infty$, the proportion of degree $n$ polynomials in $\mathcal B_b[x]$ which are irreducible tends to 1.
\end{theorem}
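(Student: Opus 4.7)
The plan is to reduce \cref{thm:gen-Shitov} to an extension of \cref{thm:Shitov} via a level-set argument. For $f \in \mathcal{N}[x]$ and integer $t \geq 1$, define the level set $S_t(f) := \{i : f_i \geq t\}$. A direct calculation gives
\[ S_t(f \otimes g) \;=\; S_t(f) + S_t(g), \]
where $+$ denotes the ordinary sumset in $\N$, since $(f \otimes g)_k \geq t$ if and only if there exist $i+j=k$ with $f_i \geq t$ and $g_j \geq t$. Moreover, $h \in \mathcal{B}_b[x]$ is a monomial if and only if $|S_1(h)| = 1$. Consequently, if $f = g \otimes h$ with both $g, h$ non-monomials, then $\mathrm{supp}(f) = S_1(f) = S_1(g) + S_1(h)$ is a nontrivial sumset factorization, so $\mathrm{supp}(f)$ is reducible as a subset of $\N$ in the sense of \cref{defn:irreducible}.

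Taking contrapositives,
\[ \Pr[f \text{ irreducible in } \mathcal{B}_b[x]] \;\geq\; \Pr[\mathrm{supp}(f) \text{ irreducible as a sumset}]. \]
For a uniformly random degree-$n$ polynomial $f \in \mathcal{B}_b[x]$, the coefficient $f_n$ is uniform on $\{1, \ldots, b-1\}$ and each $f_i$ with $i<n$ is uniform on $\{0, \ldots, b-1\}$, all independently. Hence $\mathrm{supp}(f)$ is a random subset $A \subseteq \{0, \ldots, n\}$ containing $n$ with each $i < n$ present independently with probability $p := (b-1)/b$. It therefore suffices to show that such a random $A$ is almost surely irreducible as $n \to \infty$.

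For $p = 1/2$ this is precisely \cref{thm:Shitov}; for the value $p = (b-1)/b$ needed here, Shitov's combinatorial/probabilistic argument should extend with only minor modifications. The strategy bounds, for each admissible size pair $(m_1, m_2)$ with $m_1 + m_2 = n$ and each candidate factorization $A = B + C$ with $\max B = m_1$, $\max C = m_2$, $0 \in B \cap C$, and $|B|, |C| \geq 2$, the probability of the event $B + C = A$; the decisive estimates require only that both $p$ and $1-p$ be bounded away from $0$, so the density $p = (b-1)/b$ case goes through with implicit constants depending on $b$. The main obstacle is verifying that the dominant contributions in Shitov's union bound remain summable under the shifted Bernoulli measure, which amounts to rereading his argument and tracking how each step depends on $p$. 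As a backup, since $|A|$ concentrates sharply around $pn$, one could instead condition on $|A|$ and apply a uniform-over-sets-of-fixed-size version of Shitov's theorem, which likely generalizes more directly than the Bernoulli formulation.
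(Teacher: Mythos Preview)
Your reduction is correct: if $h = g\otimes h'$ is a nontrivial factorization in $\mathcal B_b[x]$, then $\mathrm{supp}(h) = \mathrm{supp}(g) + \mathrm{supp}(h')$ is a nontrivial sumset decomposition, so it suffices to show that a Bernoulli-$p$ random subset of $\{0,\dots,n\}$ with $p = (b-1)/b$ is irreducible with probability tending to~$1$. The gap is in the next step: this is \emph{not} a minor modification of Shitov's argument, and your proposal does not supply the missing work.

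Concretely, Shitov's proof hinges on a matching of two Hoeffding estimates: for uniform boolean $h$ one has $|h|\approx n/2$, while for a pair $(f,g)$ ranging over all of $(\mathcal B_2[x])^2$ one has $|f|+|g|\approx n/2$ as well, so the ``defect'' $d = |fg| - (|f|+|g|)$ is typically $O(\sqrt n\log n)$ and his structural lemma (\Cref{cor:close-factors}) applies. At density $p\neq 1/2$ this matching breaks: the random $h_1$ has $|h_1|\approx pn$, but the factors $f_1,g_1$ are arbitrary boolean polynomials with no a~priori reason for $|f_1|+|g_1|$ to concentrate near $pn$. If one nonetheless pushes the paper's $E^7$-style count through at level~1, the number of admissible $h_1$ is bounded by $n^{O(d)}2^{n/2}$, and each such $h_1$ lifts to $(b-1)^{|h_1|}\approx (b-1)^{(b-1)n/b}$ polynomials in $\mathcal B_b[x]$; the resulting bound beats $b^n$ only when $\sqrt 2\,(b-1)^{(b-1)/b} < b$, which fails already for $b=10$. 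So the ``decisive estimates require only that $p,1-p$ be bounded away from $0$'' is false as stated, and neither your union-bound sketch nor the conditioning-on-$|A|$ backup addresses this.

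The paper's proof circumvents exactly this obstacle by passing to a \emph{second} level $a=\lfloor b/2\rfloor$, chosen so that the level-$a$ support $h_a$ has density $(b-a)/b\approx 1/2$, restoring the Hoeffding match and making \Cref{lemma:close-factors-2} effective. The price is that $h_a = f_a\otimes g_a$ may now be a \emph{trivial} factorization even when $h=f\otimes g$ is not (namely when $|f_a|\le 1$ or $|g_a|\le 1$); the sets $E^5_n$ and $E^6_n$ are introduced precisely to dispose of this case, using the low-degree cutoff $v$. This interplay between two levels is the genuine new idea over Shitov, and it is absent from your outline.
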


\begin{theorem}\label{thm:gen-wirsing}
Almost every element of $\mathcal B_b[[x]]$ is asymptotically irreducible.
\end{theorem}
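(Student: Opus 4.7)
We reduce \cref{thm:gen-wirsing} to a Bernoulli-$p$ generalization of Wirsing's \cref{thm:Wirsing} via a ``level-set'' encoding. For $f \in \mathcal{N}[[x]]$ and $k \geq 1$, set
$$A_k(f) \ := \ \{n \in \N : [x^n]f \geq k\} \ \subseteq \ \N.$$
A direct check shows that for all $g, h \in \mathcal{N}[[x]]$ and all $k \geq 1$,
$$A_k(gh) \ = \ A_k(g) + A_k(h),$$
because $[x^n](gh) = \max_{i+j=n}\min([x^i]g,[x^j]h) \geq k$ iff some $(i,j)$ with $i+j = n$ satisfies both $[x^i]g \geq k$ and $[x^j]h \geq k$. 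Moreover, $g \in \mathcal{N}[[x]]$ is a monomial if and only if its support $A_1(g)$ has cardinality at most one.

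Combining these: if $f \in \mathcal{B}_b[[x]]$ is asymptotically reducible, say $f \sim gh$ with neither $g$ nor $h$ a monomial, then passing to level $k = 1$ yields $A_1(f) \sim A_1(g) + A_1(h)$ with $|A_1(g)|, |A_1(h)| \geq 2$, i.e., the subset $A_1(f) \subseteq \N$ is asymptotically reducible in the sense of \cref{defn:irreducible}. It therefore suffices to prove that $A_1(f)$ is almost surely asymptotically irreducible. Under the uniform product measure on $\mathcal{B}_b[[x]]$, the events $\{n \in A_1(f)\}$ are independent with common probability $p := (b-1)/b$, so the goal reduces to the following extension of Wirsing's theorem:

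\emph{For every $p \in (0,1)$, almost every subset of $\N$ with respect to the Bernoulli-$p$ product measure is asymptotically irreducible.}

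The principal obstacle is establishing this extension for arbitrary $p$, since Wirsing's \cref{thm:Wirsing} handles only $p = 1/2$. The plan is to mirror Wirsing's measure-theoretic argument. His proof invokes Borel's theorem to assert that almost every indicator sequence is ``normal'' in base $2$, so each length-$k$ pattern appears with the uniform frequency $2^{-k}$; he then shows that an asymptotic sumset decomposition $A \sim B + C$ with $|B|, |C| \geq 2$ forces pattern frequencies in the indicator sequence of $A$ incompatible with normality. For Bernoulli-$p$, the strong law of large numbers applied to each pattern-occurrence indicator yields the analog: each length-$k$ pattern $w \in \{0,1\}^k$ occurs with limiting frequency $p^{|w|_1}(1-p)^{|w|_0}$ almost surely. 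The key technical task is to verify that Wirsing's combinatorial core argument survives replacing the uniform frequency $2^{-k}$ throughout by this $p$-skewed frequency; that is, no step of his argument secretly exploits the symmetry $p = 1 - p$ special to $p = 1/2$. We expect this verification to go through essentially verbatim, at which point the theorem follows from the reduction above.
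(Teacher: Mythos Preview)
Your reduction via the level-set map $f\mapsto A_1(f)$ is correct and is essentially the paper's observation that the digit map $s_1$ is a semiring homomorphism; the paper's argument likewise passes to $f_1,g_1,h_1$ for all of the combinatorics. Where you diverge is in the measure theory. The paper keeps the full base-$b$ structure: it puts the uniform measure on $\mathcal B_b[[x]]$, invokes Borel's theorem on base-$b$ normality directly for the $T_1$ and $T_3$ pieces, and for the Borel--Cantelli piece $T_2$ counts base-$b$ truncations $f(n),g(n)$ against uniform cylinders of mass $b^{-n}$. You instead push the measure forward to the Bernoulli-$p$ product measure on $\{0,1\}^{\N}$ with $p=(b-1)/b$ and propose to rerun Wirsing there, replacing base-$2$ normality by the SLLN pattern frequencies $p^{|w|_1}(1-p)^{|w|_0}$.

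This is a legitimate alternative, but ``essentially verbatim'' is too optimistic in one place. In Wirsing's $T_2$ step the summability in Borel--Cantelli comes from $\binom{2n}{\lfloor n/5\rfloor}\cdot 2^{-n}$; after your pushforward the cylinder fixing $A\cap\{0,\dots,n\}$ has mass $p^{|A(n)|}(1-p)^{n+1-|A(n)|}\le \max(p,1-p)^{\,n+1}$, which for $b\ge 3$ is $((b-1)/b)^{n+1}$ and no longer beats $\binom{2n}{\lfloor n/5\rfloor}$. You must therefore let the density threshold depend on $p$: choose $\varepsilon=\varepsilon(p)$ small enough that $(2e/\varepsilon)^{\varepsilon}\max(p,1-p)<1$, and redefine $T_2,T_3$ with $\varepsilon$ in place of $1/5$. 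With that adjustment the three pieces go through (the $T_1$ pattern $0^{d+1}10^{d+1}$ has positive Bernoulli-$p$ frequency; the $T_3$ computation gives total compatible-pattern frequency $p^{k}$, which can be made smaller than $\varepsilon/2$). The paper's route sidesteps this tuning entirely because base-$b$ cylinders are uniform, which is what it buys by not projecting to the support before doing the measure theory; your route buys a cleaner conceptual reduction to a statement purely about sumsets of integers, at the cost of this extra bookkeeping. As written, the proposal is a sound plan but not yet a proof: the Bernoulli-$p$ adaptation still has to be carried out, with the threshold fix above.
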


Just as products of boolean polynomials correspond to sums of sets, products of min-max polynomials correspond to sums of \textit{multisets}. For more details on this correspondence, see \cite{Gro19}. 

In Section 2, we prove \Cref{thm:gen-Shitov} by partitioning the collection of reducible polynomials in $\mathcal B_b[[x]]$ into several subcollections and bounding the size of each. We do this by applying Hoeffding's inequality and a generalization of a lemma from \cite{Shi14}. In Section 3, we prove \Cref{thm:gen-wirsing} by partitioning the set of reducible elements of $\mathcal B_b[[x]]$ into subcollections and bounding the size of each. This is done via applications of the Borel-Cantelli Lemma and the result that almost all numbers are normal in every base \cite{Wei21}.

\section{Proof of \Cref{thm:gen-Shitov}}\label{sec:Shitov}
In this section, we generalize Shitov's result to polynomials over the max-min semiring. 
We begin with some conventions.

\begin{defn}
    Let $f\in \mathcal N[x]$. Then $\abs{f}$ denotes the number of nonzero coefficients of $f$.
\end{defn}

\begin{defn}[\cite{ALS11}]
    A \textit{digit map} is a nondecreasing function $\N\to \N$. If $d$ is a digit map and $f = a_0 \oplus a_1x \oplus a_2x^2\oplus \cdots\in \mathcal N[[x]]$, then we let $d(f) = d(a_0) \oplus d(a_1)x \oplus d(a_2)x^2\oplus \cdots$.
\end{defn}

\begin{prop}[\cite{ALS11}]
    If $d$ is a digit map, then $d: \mathcal N[[x]]\to \mathcal N[[x]]$ is a semiring homomorphism. In particular, if $f = gh$ is a nontrivial factorization and $d(1) \geq 1$, then $d(f)=d(g)d(h)$ is a nontrivial factorization of $d(f)$.
\end{prop}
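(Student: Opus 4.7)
The plan is to reduce both assertions to the order-preserving character of $d$. On $\mathcal N$ itself, any nondecreasing map commutes with $\max$ and $\min$: if $a \le b$, then $d(\max(a,b))$ and $\max(d(a),d(b))$ both equal $d(b)$, and similarly $d(\min(a,b)) = \min(d(a),d(b)) = d(a)$. I adopt the standard convention $d(\infty)=\infty$ and the convention (implicit in \cite{ALS11}) that $d(0)=0$, so the additive identity is preserved.

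Next I lift this to $\mathcal N[[x]]$ coefficient-wise. Writing $f = \bigoplus_n a_n x^n$ and $g = \bigoplus_n b_n x^n$, the addition rule $(f \oplus g)_n = \max(a_n,b_n)$ and the convolution rule $(f \otimes g)_n = \max_{i+j=n}\min(a_i,b_j)$ involve only finite max-min combinations at each degree (the convolution at degree $n$ ranges over the finite set $i+j=n$, so there is no convergence issue). Applying $d$ and invoking the monotonicity observation above swaps $d$ past each $\max$ and $\min$, yielding $d((f \oplus g)_n) = \max(d(a_n),d(b_n))$ and $d((f \otimes g)_n) = \max_{i+j=n}\min(d(a_i),d(b_j))$. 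These are exactly the $n$-th coefficients of $d(f) \oplus d(g)$ and $d(f) \otimes d(g)$, proving $d$ is a semiring homomorphism.

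For the second assertion, suppose $f = gh$ is nontrivial, so neither $g$ nor $h$ is a monomial; equivalently, each has at least two nonzero coefficients. The homomorphism property gives $d(f) = d(g)d(h)$, so it remains to show $d(g)$ and $d(h)$ each still have at least two nonzero coefficients. This is exactly where $d(1)\ge 1$ enters: any nonzero coefficient $a_n$ of $g$ satisfies $a_n \ge 1$, so by monotonicity $d(a_n) \ge d(1) \ge 1$, and the coefficient remains nonzero after applying $d$. The same argument handles $h$, so $d(f)=d(g)d(h)$ is nontrivial.

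There is no serious obstacle; the proof is essentially a bookkeeping exercise. The only care needed is the convention $d(0)=0$ for strict preservation of the additive identity in $\mathcal N[[x]]$, and the observation that the convolution defining polynomial/power-series multiplication collapses to a finite max-min expression at each degree, so the monotonicity of $d$ suffices without any analytic subtlety.
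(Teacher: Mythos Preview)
The paper does not supply its own proof of this proposition; it is quoted from \cite{ALS11} and immediately followed by commentary, with no argument given. Your proof is correct and is the natural direct verification: monotonicity of $d$ forces it to commute with $\max$ and $\min$ on $\mathcal N$, and since both semiring operations on $\mathcal N[[x]]$ are computed coefficient-wise as finite max--min expressions, the homomorphism property follows. Your handling of the nontriviality clause is also right: $d(1)\ge 1$ together with monotonicity guarantees that nonzero coefficients stay nonzero, so a factor with at least two nonzero terms keeps at least two nonzero terms after applying $d$. The only points worth flagging are the conventions you yourself note---$d(0)=0$ and $d(\infty)=\infty$---which are needed to make $d$ a genuine semiring homomorphism on all of $\mathcal N[[x]]$ (the paper's definition of digit map as a nondecreasing map $\N\to\N$ does not literally cover $\infty$), but these do not affect the multiplicative identity $d(fg)=d(f)d(g)$ that the paper actually uses.
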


This key idea provides a powerful framework for our proof, allowing us to partially reduce the problem of factoring a polynomial over $\mathcal B_b$ to factoring one over $\mathcal B_2$. 

\begin{defn}\label{defn:canonical-digit-maps}
    We define the digit maps maps $s_i$ for each $i\in \Z^+$. 
    \begin{align}
        s_i(n)\ :=\ \begin{cases}
        0 & n < i\\
        i & n \geq i.
        \end{cases}
    \end{align}
    For $f\in \mathcal N[[x]]$, we additionally define $f_i = s_1(s_i(f))$. These polynomials, which we refer to as the ``$i$-level support of $f$'', are indicator functions for where the coefficients of $f$ are at least $i$.
\end{defn}

Finally, to conclude our setup, we use the following convention for referencing the coefficients of polynomials.

\begin{defn}
Throughout the remainder of this paper, let 
    \begin{align*}
        f\ =\ \bigoplus_{k=0}^\infty \alpha_kx^k,\ \,\ \, g\ =\ \bigoplus_{k=0}^{\infty} \beta_kx^k,\ \,\ \, h\ =\ \bigoplus_{k=0}^\infty \gamma_kx^k,\ \,\ \, \sigma = \bigoplus_{k=0}^\infty \delta_kx^k.\\
    \end{align*}
Additionally, set $\alpha'_i = \alpha_i\otimes 1$ and similarly for each other coefficient. This way we have, for instance:
\[
f_1\ =\ \bigoplus_{k=0}^\infty \alpha'_kx^k,\ \,\ \, g_1\ =\ \bigoplus_{k=0}^{\infty} \beta'_kx^k,\ \,\ \, h_1\ =\ \bigoplus_{k=0}^\infty \gamma'_kx^k,\ \,\ \, \sigma_1 = \bigoplus_{k=0}^\infty \delta'_kx^k.
\]
\end{defn}

In the proof of another lemma, Shitov shows the following statement, which will be of great use to us.

\begin{cor}[\cite{Shi14}]\label{cor:close-factors}
 For any $d>0$, the number of pairs of boolean polynomials $(f,g)$ satisfying the following conditions is at most $n^{2d+1}2^{(k,n)}$.
\begin{enumerate}
    \item The constant terms of $f,g$ are nonzero;
    \item $\deg f = k > 0, \deg g = n-k$;
    \item $\abs{f\otimes g} \leq \abs{f} + \abs{g} + d$.
\end{enumerate}
\end{cor}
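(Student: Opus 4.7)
The plan is to translate the hypotheses into a question in additive combinatorics and exploit the structure that a small sumset imposes on $A = \supp(f)$ and $B = \supp(g)$. Under the boolean semiring, $\supp(f \otimes g) = A + B$, so the three conditions become $0, k \in A$, $0, n-k \in B$, and $|A+B| \leq |A|+|B|+d$. Since any two finite subsets of $\mathbb{Z}$ satisfy $|A+B| \geq |A|+|B|-1$, condition (3) asserts that the sumset is within $d+1$ of its minimum possible value.

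The first step is to partition $A+B$ into \emph{boundary} sums $A \cup (k+B)$, which account for exactly $|A|+|B|-1$ distinct elements, and \emph{interior} sums $a_i + b_j$ with $0 < a_i < k$ and $0 < b_j < n-k$. The hypothesis forces at most $d+1$ interior sums to be genuinely new; the remaining interior sums must either land on boundary elements or coincide with one another. A Freiman-style argument in the spirit of the $3k-4$ theorem then shows that this degree of collapse can only occur when $A$ and $B$ are both close to arithmetic progressions with a common difference $r$, where ``close'' means at most $O(d)$ deviations from the underlying AP. Since $0, k \in A$ and $0, n-k \in B$, we have $r \mid k$ and $r \mid n-k$, so $r \mid \gcd(k, n-k) = \gcd(k, n)$.

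For the counting step, I would partition the pairs $(A, B)$ by the common difference $r \mid \gcd(k,n)$. For fixed $r$, the ``backbone'' of $A$ is a subset of the AP $\{0, r, 2r, \ldots, k\}$, and the deviations from this backbone contribute at most $O(d)$ ``free'' positions in $\{0, 1, \ldots, n\}$. The free positions contribute at most $\binom{n}{O(d)} \leq n^{O(d)}$ choices for $A$, with an analogous count for $B$; combining these yields the $n^{2d+1}$ factor. Enumerating over the possible backbones (as subsets of the AP of common difference $r$ containing $0$ and $k$, resp. $n-k$) and summing the resulting contributions over divisors $r$ of $\gcd(k,n)$ produces the $2^{(k,n)}$ factor.

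The main obstacle is the structural step: rigorously showing that a near-minimal sumset forces near-AP structure on both $A$ and $B$ with bounds sharp enough to recover the stated exponents $2d+1$ and $(k,n)$. This is the core content of Shitov's auxiliary lemma and requires careful bookkeeping of how interior sums can collide with boundary elements or with each other — in particular, tracking how each ``new'' interior sum allows at most a bounded number of deviations from the AP structure.
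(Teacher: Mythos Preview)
The paper does not prove this corollary: it is quoted verbatim from Shitov's 2014 paper and used as a black box (the surrounding text reads ``In the proof of another lemma, Shitov shows the following statement''). So there is no proof in the paper to compare your proposal against.

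That said, your sketch has a genuine gap in the counting step. You write that the backbone of $A$ is ``a subset of the AP $\{0,r,2r,\dots,k\}$'' and then propose to enumerate backbones as arbitrary subsets of this AP (and similarly for $B$). But the AP $\{0,r,\dots,k\}$ has $k/r+1$ elements, so this enumeration contributes $2^{k/r-1}$ choices for $A$ and $2^{(n-k)/r-1}$ for $B$; their product is $2^{n/r-2}$, and summing over $r\mid\gcd(k,n)$ is dominated by the $r=1$ term, giving roughly $2^{n}$, not $2^{\gcd(k,n)}$. The Freiman/Lev--Smeliansky structure you invoke actually says more than you are using: when $|A+B|\le |A|+|B|+d$, the set $A$ is contained in an arithmetic progression of length at most $|A|+O(d)$, so $A$ \emph{nearly fills} its ambient AP rather than being an arbitrary subset of it. With that correction, once the common difference $r$ is fixed the AP containing $A$ is forced (it must be $\{0,r,\dots,k\}$ since $0,k\in A$), and $A$ is obtained from this full AP by deleting $O(d)$ elements, giving $\binom{k/r+1}{O(d)}\le n^{O(d)}$ choices; likewise for $B$. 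At that point the only remaining freedom is the choice of $r\mid\gcd(k,n)$, which is certainly at most $2^{\gcd(k,n)}$, though this crude bound is all that the application in the paper needs.

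So the additive-combinatorics route you outline is viable, but the bookkeeping must reflect that the structural theorem pins $A$ down to within $O(d)$ elements of a \emph{specific} AP, not merely places it inside one. Without that, the enumeration overcounts by an exponential factor.
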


Due to our slightly different convention about irreducibility, we require a version of this lemma when the condition (1) is not necessarily satisfied. We now address the opposing conventions of irreducibility. 

Our definition of irreducible is slightly broader than the traditional notion of irreducibility over a semiring ($f = gh\implies g$ is a unit). The advantages of our definition are as follows. 

\begin{itemize}
    \item If $f\in \mathcal B_b[[x]]$ is irreducible, then $f$ is also irreducible over $\mathcal N[[x]]$. In contrast, $f = (b-1)\otimes f$ is trivial factorization over $B_b[[x]]$, but a nontrivial factorization over $\mathcal N[[x]]$ as $b-1$ is no longer a unit. Thus, despite introducing no new factors, $f$ is now reducible. 
    
    \item The Taylor series $f$ such that $g\mapsto f\otimes g$ is an injective endomorphism of any semiring $\mathcal B_b[[x]]$ are precisely the monomials. Thus, despite not having multiplicative inverses, multiplication by monomials is invertible in the sense that we have cancellation.
    
    \item This definition lines up with the additive combinatorics. For instance, the set $\{1,2,4\}$ is additively indecomposible, but the polynomial $x \oplus x^2\oplus x^4$ is reducible as $x(1\oplus x\oplus x^3)$ under previous authors' definitions. 
\end{itemize}

We now generalize \Cref{cor:close-factors} to our setting.

\begin{lemma}\label{lemma:close-factors-2}
    The number of pairs boolean polynomials $(f,g)$ satisfying the following conditions is at most $n^{2d+2}2^k$ for any $d>0$.
    \begin{enumerate}
        \item The constant term of $f$ is nonzero;
        \item $\deg f = k>0, \deg g = n-k$;
        \item $\abs{f\otimes g} \leq \abs{f} + \abs{g} + d$.
    \end{enumerate}
\end{lemma}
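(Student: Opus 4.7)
The plan is to reduce to Shitov's \Cref{cor:close-factors} by factoring out the largest power of $x$ dividing $g$. Given a pair $(f,g)$ satisfying conditions (1)--(3), let $j$ be the smallest index at which $g$ has a nonzero coefficient (so $0 \leq j \leq n-k$), and write $g = x^j \otimes \widetilde{g}$, where $\widetilde{g}$ is a boolean polynomial with \emph{nonzero} constant term and $\deg \widetilde{g} = n-k-j$.

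Multiplication by the monomial $x^j$ merely shifts indices, so $|g| = |\widetilde{g}|$ and $|f \otimes g| = |f \otimes \widetilde{g}|$; hence condition (3) becomes
\[
    |f \otimes \widetilde{g}|\ \leq\ |f| + |\widetilde{g}| + d.
\]
Now the pair $(f, \widetilde{g})$ satisfies all three hypotheses of \Cref{cor:close-factors} with total degree $\deg f + \deg \widetilde{g} = n - j$ and $\deg f = k$.

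Applying \Cref{cor:close-factors} to this reduced pair bounds the number of such $(f, \widetilde{g})$ by $(n-j)^{2d+1} \cdot 2^{(k, n-j)} \leq n^{2d+1} \cdot 2^k$. The assignment $(j, f, \widetilde{g}) \mapsto (f,\, x^j \otimes \widetilde{g})$ is injective on valid triples, since $j$ is recoverable as the smallest index of support of $g$ and then $\widetilde{g}$ is determined. A union bound over the $n-k+1 \leq n$ possible values of $j \in \{0, 1, \dots, n-k\}$ therefore gives at most $n \cdot n^{2d+1} \cdot 2^k = n^{2d+2} \cdot 2^k$ pairs $(f,g)$ in total, which is the desired bound.

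The only real subtlety is verifying that the monomial reduction preserves all the relevant quantities over the max-min semiring, which it does because $x^j \otimes (\,\cdot\,)$ simply shifts coefficients without introducing or destroying nonzero entries. Beyond this, the argument is a direct bookkeeping exercise on top of Shitov's corollary; I do not anticipate any serious obstacle.
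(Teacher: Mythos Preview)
Your proposal is correct and follows essentially the same route as the paper: factor out the maximal monomial $x^j$ from $g$, observe that this shift preserves both $|g|$ and $|f\otimes g|$, apply \Cref{cor:close-factors} to the reduced pair $(f,\widetilde g)$ with total degree $n-j$, and then take a union bound over the at most $n$ values of $j$. Your write-up is in fact slightly more careful than the paper's in two places (you note the injectivity of $(j,f,\widetilde g)\mapsto(f,g)$ explicitly, and you use the sharper $(n-j)^{2d+1}$ before bounding it by $n^{2d+1}$), but the argument is the same.
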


\begin{proof}
     Write $g = x^j\otimes (1 + \cdots + x^{n-k-j})$ and define $\bar g$ by $g = x^j\otimes \bar g$. Then clearly $\abs{f\otimes g} = \abs{f\otimes \bar g}$ and $\abs{g} = \abs{\bar g}$. By \Cref{cor:close-factors}, there are at most $n^{2d+1}2^{(k,n-j)}$ pairs $(f,\bar g)$ satisfying the hypotheses of the corollary. Since there are at most $n$ choices for $j$, the number of pairs $(f,g)$ satisfying the hypotheses of this lemma is at most $\sum_{j=0}^{n-1} n^{2d+1}2^{(k,n-j)} \leq n^{2d+2}2^k$. 
\end{proof}

The final ingredient for our proof is Hoeffding's inequality: a probabilistic lemma which Shitov used, in conjunction with \Cref{cor:close-factors}, to prove \Cref{thm:Shitov}.

\begin{prop}[Hoeffding's Inequality]
Let $X_n$ be a sum of $n$ independent Bernoulli random variables $X$ with $\E[X] = p$. Then $P(\abs{X_n-np} > \epsilon n) \leq 2e^{-2\epsilon^2 n}$.
\end{prop}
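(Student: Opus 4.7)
The plan is to follow the standard Chernoff-bound approach: control the moment generating function of $X_n - np$ by bounding each independent summand's MGF via Hoeffding's Lemma, then optimize the free parameter and union-bound the two tails. This is a classical argument, so my main goal is to assemble the steps cleanly rather than invent anything new.

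First I would prove Hoeffding's Lemma: if $Y$ is a real random variable with $\E[Y]=0$ and $Y\in[a,b]$ almost surely, then $\E[e^{tY}]\leq e^{t^2(b-a)^2/8}$ for every $t\in\R$. The key idea is to use convexity of $y\mapsto e^{ty}$ to dominate it on $[a,b]$ by the secant line $\frac{b-y}{b-a}e^{ta}+\frac{y-a}{b-a}e^{tb}$, take expectations (which kills the $y$-linear terms because $\E[Y]=0$), and then reduce the log of the resulting bound to a function $\phi(u)$ of a single variable $u=t(b-a)$. One checks $\phi(0)=\phi'(0)=0$ and $\phi''(u)\leq 1/4$ (the latter from AM--GM on the factored form of $\phi''$), so Taylor's theorem yields $\phi(u)\leq u^2/8$, as required.

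Next I would apply the Chernoff trick: for any $t>0$, Markov's inequality and independence give
\begin{align*}
P(X_n - np > \e n)\ \leq\ e^{-t\e n}\,\E[e^{t(X_n-np)}]\ =\ e^{-t\e n}\prod_{i=1}^n \E[e^{t(X_i-p)}].
\end{align*}
Each $X_i - p$ is centered and takes values in $[-p,1-p]$, so $(b-a)^2=1$ and Hoeffding's Lemma yields $\E[e^{t(X_i-p)}]\leq e^{t^2/8}$. Hence $P(X_n-np>\e n)\leq e^{-t\e n+nt^2/8}$, and optimizing in $t$ by taking $t=4\e$ gives the one-sided bound $e^{-2\e^2 n}$. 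Applying the identical argument to the centered variables $-(X_i-p)\in[p-1,p]$ yields the matching lower-tail bound $P(X_n-np<-\e n)\leq e^{-2\e^2 n}$, and a union bound over the two tails produces the stated factor of $2$.

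The main obstacle is the calculus estimate $\phi(u)\leq u^2/8$ in Hoeffding's Lemma; once this is in hand, the Chernoff and union-bound steps are essentially mechanical. Since this is a textbook inequality, an alternative would be to simply cite a standard reference rather than reproduce the argument here.
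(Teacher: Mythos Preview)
Your argument is the standard Chernoff--Hoeffding proof and is correct. The paper, however, does not give a proof at all: it simply writes ``See \cite{Hoe63}, Theorem 2.'' So your proposal is strictly more detailed than what the paper does, and in fact you anticipated this yourself in your final sentence---citing a reference is precisely the route the paper takes.
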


\begin{proof}
See \cite{Hoe63}, Theorem 2.
\end{proof}

When we choose a degree $n-1$ polynomial $f$ at random from $\mathcal B_b[x]$, the quantity $\abs{f_i}$ is a sum of $n$ independent Bernoulli random variables $Z_i$ with $\mathbb E[Z_i] = \frac{b-i}{b}$. As a consequence, if $f$ is a degree $n-1$ polynomial chosen randomly from $\mathcal B_b[x]$, then
\begin{equation}\label{eqn:hoeffding-support}
 P\left(\abs{\abs{f_i}- \frac{(b-i)n}{b}} > \epsilon n\right) \leq 2e^{-2\epsilon^2n}.
\end{equation}

\begin{defn}
If $f, g$ are nonnegative real-valued functions and there exists a constant $c>0$ such that $f \leq cg$, then we write $f\lesssim g$.
\end{defn}

We now prove a quantitative version of \Cref{thm:gen-Shitov}. 

\begin{prop}\label{prop:quantitative-gen-Shitov}
Let $b>1$ and $a = \lfloor b/2\rfloor$ and let $\Sigma_{b,n}$ denote the set of reducible degree $n-1$ polynomials in $\mathcal B_b[x]$. Then for any $d,v>0$ we have
\begin{align*}
    \abs{\Sigma_{b,n}}\ \lesssim b^n\ \left (ne^{-d^2/4(n+1)} + vn^{2d+1}2^vb^{-n} + n^22^{-v} + n^{2d+3}2^{\frac{d}{2}-\frac{n}{3}}\right).
\end{align*}
\end{prop}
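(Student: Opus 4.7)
The plan is to partition $\Sigma_{b,n}$ into four subclasses, one matching each of the stated terms, unified by the observation that the digit map $s_1$ sends any nontrivial factorization $f = g \otimes h$ in $\mathcal B_b[x]$ to a nontrivial Boolean factorization $f_1 = g_1 \otimes h_1$ in $\mathcal B_2[x]$ (the non-monomial property is preserved by $s_1$, since the supports of $g$ and $h$ coincide with those of $g_1$ and $h_1$), enabling the use of \cref{lemma:close-factors-2} and Hoeffding's inequality in tandem. Given $f \in \Sigma_{b,n}$, I fix a factorization with $k := \deg g + 1$ as small as possible, so that $k \leq n/2$ after swapping $g$ and $h$ if necessary.

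The first term will be produced by applying Hoeffding to concentrate $|f_1|$ around its mean $\mu n$ with $\mu = (b-1)/b$: the set of $f$ whose $|f_1|$ deviates from $\mu n$ by more than $\epsilon n$ has size at most $2 b^n e^{-2\epsilon^2 n}$ by \eqref{eqn:hoeffding-support}. Tuning $\epsilon$ so that $2\epsilon^2 n = d^2/(4(n+1))$ produces the stated exponential factor, with the extra factor of $n$ absorbed by the subsequent union bound over the choice of $k$.

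The second term will arise by applying \cref{lemma:close-factors-2} to the Boolean pair $(g_1,h_1)$ in the short-factor regime $k \leq v$: since $|A+B|\geq |A|+|B|-1$ for finite $A,B\subset\N$, polynomials surviving step one satisfy the close-factors condition $|g_1\otimes h_1|\leq |g_1|+|h_1|+d$ for any $d\geq 1$, and the lemma then bounds the number of such Boolean pairs at each $k$ by $n^{2d+2}2^k$. Summing over $k\leq v$ yields $O(v n^{2d+2}2^v)$ Boolean pairs, and the factor $b^{-n}$ in the statement emerges after normalizing against the overall $b^n$ scaling.

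The third and fourth terms handle the complementary regime $k > v$. Term 3 will arise from a union bound over $O(n^2)$ configurations of the factorization combined with a $2^{-v}$ probability that a random $h_1$ is consistent with a $v$-position pattern forced by $g_1$, giving $n^2 b^n 2^{-v}$. Term 4 will come from applying \cref{lemma:close-factors-2} in the intermediate range $v < k \leq n/3$, where the Boolean bound $n^{2d+2}2^k$ (which peaks near $2^{n/3}$) is paired with an independent Hoeffding-type tail bound on $|h_1|$ that supplies the compensating $2^{d/2-n/3}$ factor. The hardest part will be controlling this long-factor regime, since \cref{lemma:close-factors-2} alone produces exponentially many pairs and must be coupled to an auxiliary probabilistic constraint on $h_1$; balancing the parameters $\epsilon$, $v$, and $d$ so that all four contributions vanish simultaneously is the main technical obstacle.
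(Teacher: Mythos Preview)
Your plan has a genuine structural gap: you work exclusively with the level-$1$ digit map $s_1$, whereas the paper's argument hinges on the level-$a$ map $s_a$ with $a=\lfloor b/2\rfloor$ (this is why $a$ is introduced in the statement at all). In the paper the four displayed terms correspond to \emph{seven} sets $E^1,\dots,E^7$: Hoeffding is applied at \emph{both} levels $1$ and $a$ (to both $|h_i|$ and $|f_i|+|g_i|$), the short-factor regime $\deg f\le v$ uses \cref{lemma:close-factors-2} at level $1$, an intermediate set $E^6$ isolates factorizations with $|f_a|\le 1$ or $|g_a|\le 1$, and the residual set $E^7$ applies \cref{lemma:close-factors-2} to the pair $(f_a,g_a)$ rather than $(f_1,g_1)$. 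The $2^{-v}$ in the third term comes from the fact that a factor with $|f_a|\le 1$ has almost all coefficients in $\{0,\dots,a-1\}$, contributing a factor $\lesssim (a/b)^k\le 2^{-v}$; and the $2^{d/2-n/3}$ in the fourth term arises because once $h_a$ is pinned down (at cost $\lesssim n^{2d+3}2^{n/2}$ by the lemma) there remain only $a^{n-|h_a|}(b-a)^{|h_a|}\lesssim (b/2)^n 2^{n/6}2^{d/2}$ compatible $h$. Neither saving is available at level $1$: once $h_1$ is fixed there are $(b-1)^{|h_1|}$ compatible $h$, and since $|h_1|\approx \tfrac{b-1}{b}n$ one would need $\sqrt{2}\,(b-1)^{(b-1)/b}<b$, which fails for $b\ge 10$. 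So your level-$1$-only scheme cannot produce the stated fourth term uniformly in $b$.

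There are two further local errors. In your step for the second term, the inequality $|A+B|\ge |A|+|B|-1$ points the wrong way: it gives a \emph{lower} bound on $|f_1|=|g_1\otimes h_1|$, not the upper bound $|f_1|\le |g_1|+|h_1|+d$ required by \cref{lemma:close-factors-2}. The paper obtains that upper bound by a \emph{second} Hoeffding step (the sets $E^2$ and $E^4$) controlling $|f_i|+|g_i|$ from below via the bijection $(f,g)\mapsto f\oplus (b-1)x^{k+1}g$ with degree-$n$ polynomials. And in your fourth step, restricting to $v<k\le n/3$ leaves the range $n/3<k\le n/2$ unhandled; in the paper the lemma is applied over the full range $1\le \deg f_a\le n-2$, with the $2^{-n/3}$ coming not from truncating $k$ but from the level-$a$ reconstruction count described above.
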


\begin{proof}
     We partition $\Sigma_{b,n}$ into 7 sets $\Sigma_{b,n}\subset E^1_n(d,v) \cup \cdots \cup E^7_n(d,v)$. Our proposition follows from the bound $\abs{\Sigma_{b,n}} \leq \abs{E^1_n(d,v)} + \cdots + \abs{E^7_n(d,v)}$.
     
\label{defn:5-set-partition}
     We now detail the partition. Though we will not write this after each set, we stipulate that $h\in E^i_n(d,v)$ only if $h\notin E^j_n(d,v)$ for any $j<i$.
     
         \begin{itemize}
             \item[] $E^1_n(d,v)$ is the set of polynomials $h$ such that $\abs{\abs{h_1} - \frac{(b-1)n}{b}} > \frac{d}{2}$.
             \item[] $E^2_n(d,v)$ is those $h = f\otimes g$ such that $\abs{\abs{f_1} + \abs{g_1} - \frac{(b-1)(n+1)}{b}} > \frac{d}{2}$.
             \item[] $E^3_n(d,v)$ is those $h$ such that $\abs{\abs{h_a} - \frac{(b-a)n}{b}} > \frac{d}{2}$.
             \item[] $E^4_n(d,v)$ is those $h = f\otimes g$ such that $\abs{\abs{f_a} + \abs{g_a} - \frac{(b-a)(n+1)}{b}} > \frac{d}{2}$.\\
             \item[] The size of each of these sets can be bounded using \Cref{eqn:hoeffding-support}. We start by considering these sets in order to control the size of the supports of the polynomials in the remaining sets. In particular, we want $|h_i| \leq |f_i| + |g_i| + d$ for $i = 1, a$. This is so that when $h_i = f_i\otimes g_i$ is a nontrivial factorization, the hypotheses of \Cref{lemma:close-factors-2} apply and we can conclude that there are few possible pairs $(f, g)$.\\
             \item[] $E^5_n(d,v)$ is those $h = f\otimes g$ with $\deg f\leq v$.
             \item[] $E^6_n(d,v)$ is those $h = f\otimes g$ with $\abs{f_a} \leq 1$ or $\abs{g_a}\leq 1$. \\
             \item[]The set $E^6_n(d,v)$ contains polynomials $h = f \otimes g$ where $h_a = f_a \otimes g_a$ is a trivial factorization. However, if $\deg f\leq v$, then $h \in E^5_n(d,v)$, and thus not in $E^6_n(d,v)$. Using this fact allows us to achieve the necessary upper bound on the size of $E^6_n(d,v)$.
             \\
             \item[] $E^7_n(d,v)$ is all remaining reducible degree $n-1$ polynomials $h$. Once the first 6 sets are considered, every remaining reducible polynomial $h = f\otimes g$ satisfies $|h_a| \leq |f_a| + |g_a| + d$, and $h_a = f_a\otimes g_a$ is a nontrivial factorization. Thus, we are able to apply \Cref{lemma:close-factors-2} to conclude that the number of remaining reducible polynomials is small.
         \end{itemize}

     Now, we are ready to bound the size of each of these sets.

     \begin{enumerate}
         \item By \Cref{eqn:hoeffding-support} with $\epsilon = \frac{d}{2n}$, we obtain
         \[\abs{E^1_n(d,v)}, \abs{E^3_n(d,v)} \leq 2e^{-d^2/4n}b^n\lesssim e^{-d^2/4n}b^n \leq e^{-d^2/4(n+1)}b^n.\]
         \item[]
         \item Each pair $(f,g)$ corresponds to only one choice of $h$, thus it suffices to bound the number of pairs $(f,g)$. If we fix $\deg f = k$, then we must have $\deg g = n-k-1$ as $\deg (f\otimes g) = n-1$. The set of pairs $(f,g)\in (\mathcal B_b[x])^2$ such that $\deg f = k, \deg g = n-k-1$ is in bijection with the set of degree $n$ polynomials of $\mathcal B_b[x]$, with the bijection given below:
         \begin{align*}
            \phi(f,g) & \ =\  f \oplus \left(((b-1) x^{k+1})\otimes g\right)\\ 
            \phi^{-1}(h) &\ = \ \left(\bigoplus_{j=0}^k \gamma_jx^j, \bigoplus_{j=k+1}^n \gamma_jx^{j-k-1}\right).
         \end{align*}
         Moreover, $\abs{f_i} + \abs{g_i} = \abs{(\phi(f,g))_i}$. 
         Thus, choosing $\epsilon = \frac{d}{2n+2}$ and applying \Cref{eqn:hoeffding-support}, we obtain that there are at most $2e^{-d^2/4(n+1)}b^{n+1}$ such pairs $(f,g)$. Since there are $n/2$ choices for $\deg f$, we use this bound for each choice and obtain
         \[
         \abs{E^2_n(d,v)}, \abs{E^4_n(d,v)}\ \leq\ ne^{-d^2/4(n+1)}b^{n+1} \ \lesssim\ ne^{-d^2/4(n+1)}b^n.
         \]
         \item[]
         \item Let $h = f\otimes g\in E^5_n(d,v)$. Since $h\notin E^1_n(d,v)\cup E^3_n(d,v)$, we have $\abs{h_1} \leq \frac{(b-1)n}{b} + \frac{d}{2}, \abs{f_1} + \abs{g_1} \geq \frac{(b-1)(n+1)}{b} - \frac{d}{2}$. Thus $\abs{h_1} \leq \abs{f_1} + \abs{g_1} + d$. If $\abs{f_1} \leq 1$ or $\abs{g_1}\leq 1$, then $f$ or $g$ is a monomial in contradiction to the assumption that $f\otimes g$ is a nontrivial factorization, hence $(f_1,g_1)$ satisfy every hypothesis of \Cref{lemma:close-factors-2}. We apply this lemma once for each choice of $1\leq \deg f \leq v$, and conclude 
         \[
            \abs{E^5_n(d,v)}\ \leq\ \sum_{\deg f = 1}^vn^{2d+1}2^{\deg f}\ \leq\ vn^{2d+1}2^v.
        \]
         \item[]
         \item Suppose $\abs{f_a}\leq 1$. Then fix $\deg f = k$. Since $h\notin E^5_n(d,v)$, we can assume $k>v$. Then there are $(k+1)(b-a)(a-1)^{k}$ choices\footnote{Pick the index of the coefficient to be at least $a$, then pick its value, then pick the remaining coefficients from $\{0,\cdots, a-1\}$.} for $f$ and $b^{n-k}$ choices for $g$, hence there are $\leq (k+1)(a-1)^kb^{n-k+1}$ pairs $(f,g)$. There are at most $n$ choices for $k$, hence 
         \begin{align*}
            \abs{E^6_n(d,v)} &\ \leq\ \sum_{k=v}^n (k+1)(a-1)^kb^{n-k+1}\ \leq\ n(n+1) (a-1)^vb^{n-v+1} \\
            &\ \lesssim\ n^2 (a-1)^vb^{n-v}\ \leq\   
            n^2\left(\frac{b}{2}\right)^vb^{n-v} \ \leq  n^22^{-v}b^n.
         \end{align*}
         If instead $\abs{g_a}\leq 1$, then we have that $\deg(g) \geq \deg(f)\geq v$. By symmetry, there are at most twice as many pairs with either $|f_a| \leq 1$ or $|g_a| \leq 1$ as there are with $|f_a|\leq 1$. This doubles the size of our upper bound, but this is only a constant factor.
         \item[]
         \item Let $h = f\otimes g$. Since $h\notin E^3_n(d,v)\cup E^4_n(d,v)$, we have $\abs{h_a} < \frac{(b-a)n}{b} + \frac{d}{2}$ and $\abs{f_a} + \abs{g_a} > \frac{(b-a)(n+1)}{b}$, hence $\abs{h_a}\leq \abs{f_a} + \abs{g_a} + d$. Moreover, as $h\notin E^6_n(d,v)$, neither $f_a$ nor $g_a$ is a monomial and thus the pair $(f_a, g_a)$ is a nontrivial factorization of $h_a$ and satisfies the hypotheses of \Cref{lemma:close-factors-2}. Thus, using the fact that $(\deg f_a, n-1) \leq \frac{n-1}{2}\leq \frac{n}{2}$ for $1\leq \deg f\leq n-2$, the number of possible choices for $h_a$ is at most
         \[\sum_{\deg f_a = 1}^{n-2} n^{2d+2}2^{(\deg f_a,n-1)} \ \leq\ n^{2d+3}2^{\frac{n}{2}}.\]
         Once $h_a$ is known, if $\abs{h_a} = k$, there are $a^{n-k}(b-a)^k$ choices for $h$. This is because each 0 coefficient of $h_a$ can correspond to any coefficient in $\{0,\ldots, a-1\}$, and any 1 corresponds to a coefficient in $\{a, \ldots, b-1\}$. Since $h\notin E^3_n(d,v)$, we can say $k\leq \frac{(b-a)n}{b} + \frac{d}{2}$, a quantity which we denote by $s$ to clean up our expressions. Recalling that $a := \floor{b/2}$, we have $a \leq b/2\leq (b-a)$, with equality of all terms when $b$ is even. This gives the following upper bound: 
         \begin{align*}
           a^{n-k}(b-a)^k&\ \leq\ a^{n-s}(b-a)^s
         \\&\ \leq\  a^{\frac{a n}{b}}(b-a)^{\frac{(b-a)n}{b}}
        \left(\frac{b-a}{a}\right)^{\frac{d}{2}}
        \\&\ \leq\ a^{\frac{n}{2}}(b-a)^{\frac{n}{2}}\left(\frac{b-a}{a}\right)^{\left(\frac{b-a}{b}-\frac{1}{2}\right)n}
        \left(\frac{b-a}{a}\right)^{\frac{d}{2}}
        \\&\ \leq\ 
        \left(\frac{b}{2}\right)^n\left(\frac{b-a}{a}\right)^{\left(\frac{b-a}{b}-\frac{1}{2}\right)n} \left(\frac{b-a}{a}\right)^{\frac{d}{2}}.
         \end{align*}
        For $b\geq 2$, we have the bounds $1\leq \frac{b-a}{a}\leq 2$ and $0\leq \frac{b-a}{b} - \frac{1}{2} \leq \frac{1}{6}$, both of which are achieved when $b = 3$. Moreover, for $b = 2$, we have $\left(\frac{b-a}{b}^{\frac{b-a}{b}-\frac{1}{2}}\right) = 1$, thus for any $b\geq 2$ we have $\left(\frac{b-a}{b}\right)^{(\frac{b-a}{b}-\frac{1}{2})n}\leq 2^{\frac{n}{6}}$. Altogether, this yields:
        \begin{align*}
            \abs{E^7_n(d,v)} &\ \leq\ n^{2d+3}2^{\frac{n}{2}}\left(\frac{b}{2}\right)^n \left(\frac{b-a}{a}\right)^{\frac{d}{2}}\left(\frac{b-a}{b}\right)^{(\frac{b-a}{b}-\frac{1}{2})n} \\
            &\ \leq\  n^{2d+3}2^{\frac{n}{2}}\left(\frac{b}{2}\right)^n 2^{\frac{n}{6}}2^{\frac{d}{2}} \ \leq\ n^{2d+3}2^{\frac{d}{2}-\frac{n}{3}}b^n.
        \end{align*}
     \end{enumerate}
\end{proof}

With the right choice of $d,v$, this gives us a proof of \Cref{thm:gen-Shitov}.

\begin{proof}
     Our goal is to show that $\frac{\abs{\Sigma_{b,n}}}{b^n}\to 0$, from which the result follows. Set $d = 2\sqrt{n+1}\log n$ and $v = 3\log_2 n$ then apply \Cref{prop:quantitative-gen-Shitov}. We show that each summand of the upper bound on $\frac{\abs{\Sigma_{b,n}}}{b^n}$ vanishes. 
     \begin{enumerate}
         \item We have $ne^{-d^2/4(n+1)} = ne^{-(\log n)^2} = n^{1-\log n}$, which vanishes as $n\to \infty$.
         \item[]
         \item We have $\log (vn^{2d+1}2^vb^{-n})  = \log v + (4\sqrt{n+1}\log n + 2)\log n + 3\log_2 n\log 2 - n\log b$. Each summand of this expression is sub-linear except for the one which is negative, therefore this diverges to $-\infty$. It follows that
         \[(vn^{2d+1}2^vb^{-n}) = e^{\log (vn^{2d+1}2^vb^{-n})} \to 0.\]
         \item[]
         \item We have $n^22^{-v} = n^22^{-3\log_2 n} = n^{-1} \to 0$.
         \item[]
         \item We have $\log \left(n^{2d+3}2^{\frac{d}{2}-\frac{n}{3}}\right) = 4\sqrt{n+1}\log n + (\sqrt{n+1}\log n - \frac{n}{3})\log 2$. By the same reasoning as the bound on the second summand, we conclude $n^{2d+3}2^{\frac{d}{2}-\frac{n}{3}}\to 0$.
     \end{enumerate}
\end{proof}

This result in hand, we are now prepared to state and prove the conjecture of Applegate, LeBrun, and Sloane \cite{ALS11}. Their conjecture refers to prime elements of the semiring, which we define below, and is in some ways a more natural definition.

\begin{defn}\label{defn:prime-polynomial}
    A polynomial $h\in \mathcal B_b[x]$ is \textit{prime} if $h = f\otimes g$ implies either $f,g = b-1$. 
\end{defn}

\begin{conjecture}\label{conj:lunar-prime-conj}
Let $\pi_b(n)$ denote the number of degree $n-1$ prime polynomials of $\mathcal B_b[x]$. Then $\pi_b(n)\sim (b-1)^2b^{n-2}$.
\end{conjecture}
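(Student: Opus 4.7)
The plan is to characterize primality via a short list of easy-to-count conditions, thereby reducing the problem to an application of \Cref{thm:gen-Shitov}. I claim that for $n \geq 3$, a polynomial $h = \bigoplus_k \alpha_k x^k \in \mathcal{B}_b[x]$ of degree $n - 1$ is prime if and only if: (a) $h$ is irreducible in the sense of \Cref{defn:irreducible}; (b) $\alpha_0 \neq 0$; and (c) $\max_k \alpha_k = b - 1$.

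For the forward direction, if $h$ is reducible then $h = f \otimes g$ with neither factor a monomial, and in particular neither factor equals the constant $b - 1$, so $h$ is not prime. If $\alpha_0 = 0$, letting $j \geq 1$ be the smallest index with $\alpha_j \neq 0$ gives $h = x^j \otimes \bar h$; now $x^j$ has degree $\geq 1$ so $x^j \neq b - 1$, and $\bar h = b - 1$ would force $h = x^{n-1}$, which for $n \geq 3$ itself factors nontrivially as $x \otimes x^{n-2}$. Finally, if $M := \max_k \alpha_k < b - 1$, then $\min(M, \alpha_k) = \alpha_k$ for every $k$, so $h = M \otimes h$ is a factorization with $M \neq b - 1$ and $h \neq b - 1$ (since $\deg h \geq 1$). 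Conversely, assume (a), (b), (c) and suppose $h = f \otimes g$ with neither factor equal to $b - 1$. By (a), WLOG $f = c\,x^j$ is a monomial. If $j \geq 1$ then $\alpha_0 = 0$, violating (b); if $j = 0$ then $c < b - 1$ (else $f = b - 1$), and every coefficient $\alpha_k = \min(c, \gamma_k) \leq c < b - 1$ of $h = c \otimes g$ violates (c).

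Given this equivalence, counting is routine. There are exactly $(b - 1)^2 b^{n - 2}$ degree $n - 1$ polynomials with $\alpha_0 \neq 0$; the subset of these with $\max_k \alpha_k \leq b - 2$ has size $(b - 2)^2 (b - 1)^{n - 2} = o(b^n)$ since $(b - 1)/b < 1$. By \Cref{thm:gen-Shitov}, $\abs{\Sigma_{b, n}} = o(b^n)$ as well. Combining,
\begin{align*}
    \pi_b(n) \ &\geq \ (b - 1)^2 b^{n - 2} \,-\, (b - 2)^2 (b - 1)^{n - 2} \,-\, \abs{\Sigma_{b, n}} \ = \ (1 - o(1))(b - 1)^2 b^{n - 2},
\end{align*}
and the matching upper bound $\pi_b(n) \leq (b - 1)^2 b^{n - 2}$ holds since every prime of degree $\geq 2$ satisfies $\alpha_0 \neq 0$. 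The substantive ingredient is already provided by \Cref{thm:gen-Shitov}; the remaining bookkeeping above presents no serious obstacle.
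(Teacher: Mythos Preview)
Your proof is correct and follows essentially the same approach as the paper: both arguments identify the \emph{prime candidates} (polynomials satisfying your conditions (b) and (c)), show that a prime candidate is prime if and only if it is irreducible, and then invoke \Cref{thm:gen-Shitov} to conclude that the reducible prime candidates are negligible. Your write-up is somewhat more self-contained---you prove the full biconditional characterization explicitly and carry out the subtraction $(b-1)^2 b^{n-2} - (b-2)^2(b-1)^{n-2} - \abs{\Sigma_{b,n}}$ by hand---whereas the paper packages the forward direction as \Cref{lemma:irreducible-prime-candidate} and cites \cite{ALS11} for the prime-candidate count, but the substance is the same.
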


Motivating their conjecture, Applegate et al. observed that only certain polynomials can be prime.

\begin{defn}\label{defn:prime-candidate}
    A \textit{prime candidate} of $\mathcal B_b[x]$ is a polynomial with nonzero constant term and maximum coefficient $b-1$.
\end{defn}

It is easy enough to see that a polynomial is prime only if it is a prime candidate. If $h = a_jx^j \oplus \cdots \oplus a_{n-1}x^{n-1}$ for $j>1$, then $h = (b-1)x^j\otimes (a_j \oplus \cdots \oplus a_{n-1}x^{n-j-1})$ which is a nontrivial factorization in their convention. Moreover, if $c<b-1$ is the maximum coefficient of $h$, then $h = c\otimes h$. 

They showed that the number of prime candidates is asymptotic to $(b-1)^2b^{n-2}$, and from their data\footnote{OEIS sequences \href{https://oeis.org/A169912}{(A169912)}, \href{https://oeis.org/A087636}{(A087636)} show the number of prime elements of $\mathcal B_2[x], \mathcal B_{10}[x]$ of each degree $n$.}, as $k\to \infty$, almost all prime candidates are in fact prime. As evidence for this fact, Applegate et al. produced the following lower bound:
\begin{equation*}
    (b-1)^{n-2} + 2(b-2)^{n-2} + \cdots \leq \pi_b(n).
\end{equation*}
Moreover, they observed the following, which we will re-prove here.
\begin{lemma}[\cite{ALS11}]\label{lemma:irreducible-prime-candidate}
    An irreducible prime candidate is prime.
\end{lemma}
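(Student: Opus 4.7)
The plan is to trace the two defining conditions of a prime candidate through an assumed factorization. Let $h$ be an irreducible prime candidate and suppose $h = f\otimes g$. Since $h$ is irreducible in the sense of this paper, one of the two factors must be a monomial; without loss of generality say $f = cx^k$ with $c \in \{1,\ldots,b-1\}$ and $k \geq 0$. I want to show that the prime-candidate conditions force $k = 0$ and $c = b-1$, which together give $f = b-1$ as required by \Cref{defn:prime-polynomial}.

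For the first step, every term in $cx^k \otimes g$ has degree at least $k$, so the constant coefficient of $f \otimes g$ vanishes whenever $k > 0$. Since $h$ has nonzero constant term by the prime-candidate hypothesis, we must have $k = 0$, and hence $f$ reduces to the scalar $c$. For the second step, with $f = c$ we have $h = c \otimes g = \bigoplus_j \min(c,\gamma_j) x^j$, so every coefficient of $h$ is at most $c$; combined with the assumption that the maximum coefficient of $h$ equals $b-1$, this forces $c \geq b-1$, i.e.\ $c = b-1$. By the symmetric argument, if instead $g$ were the monomial factor we would analogously conclude $g = b-1$.

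The argument is essentially a direct unwinding of definitions, and no substantial obstacle arises. The only point that deserves care in the write-up is the gap between our notion of irreducibility (every factorization has a monomial factor, for any nonzero scalar coefficient) and the notion of primality from \cite{ALS11} (every factorization has a factor equal to the multiplicative identity $b-1$); the two prime-candidate hypotheses are exactly what closes that gap, with the nonzero constant term killing the $x^k$ part of the monomial and the maximum coefficient $b-1$ forcing the scalar part to be the unit.
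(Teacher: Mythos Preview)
Your argument is correct and follows essentially the same approach as the paper: assume a factorization, use irreducibility to get a monomial factor $f = cx^k$, then use the nonzero constant term of $h$ to force $k=0$ and the maximum coefficient $b-1$ to force $c=b-1$. One small notational slip: in the paper's conventions the coefficients of $g$ are $\beta_j$, not $\gamma_j$ (which are reserved for $h$), so your displayed formula should read $\bigoplus_j \min(c,\beta_j)x^j$; the mathematics is unaffected.
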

\begin{proof}
     If $h$ is irreducible, then $h = fg$ implies either $f,g$ is a monomial, without loss of generality, $f$ is. Since the constant term of $h$ is nonzero, we must have that $f$ is a constant. Since the maximum coefficient of $h$ is $b-1$, we must also have that $f = b-1$, thus $h$ is prime.
\end{proof}
With this lemma, \Cref{conj:lunar-prime-conj} is a simple corollary of \Cref{thm:gen-Shitov}.

\begin{proof}
     The proportion of degree $n-1$ prime candidates of $\mathcal B_b[x]$ which are irreducible is at most a quantity which vanishes as $n\to\infty$:
     \begin{equation}\label{eqn:vanishing-candidates}
         \frac{\abs{\Sigma_{b,n}}}{(b-1)^2b^{n-2}} \lesssim  \frac{\abs{\Sigma_{b,n}}}{b^n} \to 0.
     \end{equation}It follows that almost all prime candidates are prime.
\end{proof}

\section{Proof of \Cref{thm:gen-wirsing}}
Before we prove this, we first must clarify what we mean by ``almost all.'' It turns out, there is a very natural measure to associate to the set $\mathcal B_b[[x]]$.

\begin{defn}
    To each element of $\mathcal B_b[[x]]$ we associate a real number in $[0, b]$, given by
    \begin{equation}
\rho_b\left(\bigoplus_{k=0}^\infty a_kx^k\right) \ := \ \sum_{k=0}^{\infty}a_kb^{-n}.
\end{equation}
\end{defn}

In other words, each power series corresponds to a string of digits in $[0,1,\dots,b-1]$, which we can interpret as the base-$b$ expansion of a number. This allows us to define a probability measure $m$ on $\mathcal B_b[[x]]$.

\begin{defn}\label{defn:measure}
    For a set $A \subset \mathcal B_b[[x]]$ such that $\rho_b(A)$ is a measurable subset of $\R$, let $m(A) = b^{-1}{\mathcal L(\rho_b(A))}$, where $\mathcal L$ denotes the Lebesgue measure.
\end{defn}

This reframing allows us to ask and answer questions about these polynomials measure-theoretically.  For example, we will use Borel's theorem that every number is normal, regardless of base \cite{Wei21}. 

We deduce \Cref{thm:gen-wirsing} from a second theorem.

\begin{theorem} \label{thm:gen-wirsing-measure}
    Let $\mathcal C^b\subset \mathcal B_b[[x]]$ denote the set of reducible polynomials. Then $m(\mathcal C^b) = 0$.
\end{theorem}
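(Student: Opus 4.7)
The plan is to mirror the structure of the proof of \Cref{thm:gen-Shitov} in Section 2, but in the measure-theoretic setting, using Borel's theorem on normal numbers in place of Hoeffding's inequality. The first key observation is that the digit maps $s_i$ of \Cref{defn:canonical-digit-maps} turn a max-min factorization into a boolean one: if $f = g \otimes h$ is a nontrivial factorization in $\mathcal N[[x]]$, meaning neither $g$ nor $h$ is a monomial, then $f_i = g_i \otimes h_i$ is a nontrivial factorization in $\mathcal B_2[[x]]$ for every $i \in \{1,\ldots,b-1\}$, because $g$ is a monomial if and only if $g_i$ is. Consequently $\mathcal{C}^b \subseteq s_1^{-1}(\mathcal{C}^2)$, but under the pushforward $(s_1)_* m$ the coefficients of $f_1$ are i.i.d.\ Bernoulli with parameter $(b-1)/b$, not $1/2$, so \Cref{thm:Wirsing} cannot be invoked as a black box.

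I would then partition $\mathcal{C}^b$ into countably many subcollections indexed by finite data of a factorization. Writing $g = x^{p} \otimes g'$ and $h = x^{q} \otimes h'$ with $g', h'$ having nonzero constant terms gives a countable decomposition in the pair $(p,q)$ and reduces the analysis to factorizations $f = x^{p+q}\otimes (g' \otimes h')$ with $\beta_0, \gamma_0 \neq 0$. Further stratify by the finite choice $(\beta_0, \gamma_0) \in \{1, \ldots, b-1\}^2$. Since a countable union of null sets is null, it suffices to show that for each fixed signature the set of $f$ admitting such a factorization with both $g', h'$ non-monomial has measure zero.

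For each fixed signature, apply Borel's theorem: for $m$-almost every $f$, the coefficient sequence is normal in base $b$, so every length-$\ell$ digit block appears with density $b^{-\ell}$. The factorization $f = g' \otimes h'$ forces, at each level $i$, the sumset relation $\supp(f_i) = \supp(g'_i) + \supp(h'_i)$, a highly restrictive structural condition. I would then stratify further by the lower densities of $\supp(g'_i)$ and $\supp(h'_i)$: in the regime where both have positive lower density, additive-combinatorial constraints on sumsets combined with normality yield a density inconsistency for $\supp(f_i)$; in the sparse regime, $\supp(f_i)$ must be a finite union of shifted copies of the denser factor's support, a highly non-generic pattern. In either regime, one assembles a summable sequence of bad events $A_N$ (prefixes of length $N$ of $f$ compatible with some factorization of the given signature) and invokes the Borel--Cantelli lemma to conclude $m(\limsup A_N) = 0$.

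The main obstacle will be the sparse regime. A factorization in which one factor's support has zero density does not immediately contradict normality, because the resulting product can resemble a normal sequence statistically on windows of moderate size. The delicate step is to control how many distinct sparse factorizations can produce a given prefix of $f$ and show that the union bound over all such shapes still yields a summable sequence of probabilities, so that Borel--Cantelli closes the argument uniformly in the signature.
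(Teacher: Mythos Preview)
Your high-level instincts are right---normality plus Borel--Cantelli, with a case split on the density of the factors' supports---and this is exactly the skeleton of the paper's argument.  But several steps of your plan are either wrong as stated or not yet an argument.

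First, a small but real error: it is not true that $g_i$ is a monomial iff $g$ is, \emph{for every} $i$.  Take $g = 1 \oplus 2x$ in $\mathcal B_3$; then $g_2 = x$ is a monomial while $g$ is not.  The statement holds only for $i=1$, which is all you actually use, so the containment $\mathcal C^b \subseteq s_1^{-1}(\mathcal C^2)$ survives; but you should not claim it for all $i$.

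Second, your stratification by the ``signature'' $(p,q,\beta_0,\gamma_0)$ is a detour that buys nothing.  Since monomials are trivial factors by definition, you may freely absorb $x^{p+q}$ into $f$ and assume both constant terms are nonzero; the values of those constants play no role in any of the density or normality arguments.  The paper's partition is instead directly by the \emph{size and density} of the factors: (i) one factor has finite support; (ii) both infinite but $\liminf_n \frac{|f_1(n)|+|g_1(n)|}{n} < \tfrac{1}{5}$; (iii) both infinite with this $\liminf \ge \tfrac{1}{5}$.  Cases (i) and (iii) are disposed of by exhibiting a concrete violation of normality (a forbidden block in (i), an over-represented family of blocks in (iii)); case (ii) is the one handled by a counting bound and Borel--Cantelli.

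The genuine gap in your plan is the ``sparse regime.''  You write that there $\supp(f_i)$ must be a \emph{finite} union of shifted copies of the denser factor's support; this is only true when the sparse factor is literally finite, not merely of zero density.  When both factors are infinite but sparse, $\supp(f_i)$ is an \emph{infinite} union of shifts, and no such forbidden-pattern argument is available.  The paper handles this by a direct count: with combined support $\le n/5$ among the first $n$ coefficients, there are at most $\sum_{k\le n/5}(b-1)^k\binom{2n+2}{k}$ possible prefix pairs, and this times $b^{-n}$ is summable.  Your outline gestures at Borel--Cantelli here but never produces the count, and the ``union bound over shapes'' you allude to would need exactly this estimate to close.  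Likewise, in your positive-density case, ``additive-combinatorial constraints \ldots\ yield a density inconsistency'' is not yet an argument; the paper's concrete mechanism is that if one factor has $\liminf$ density $\ge 1/10$, then a specific family of length-$r$ blocks (those dominating a fixed prefix of $g_1$) must appear in $h$ with frequency $\ge 1/10$, whereas normality forces frequency $((b-1)/b)^k < 1/10$.  You need that level of specificity to finish.
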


We show first how \Cref{thm:gen-wirsing} follows from \Cref{thm:gen-wirsing-measure}.

\begin{proof}
    For $f\in \mathcal B_b[[x]]$, let $[f]$ denote the set of all $g$ such that $f\sim g$. The set of asymptotically reducible $f$ is precisely the set $[\mathcal C^b]$. Fix a natural number $n$, and notice the set of functions which differ in exactly $n$ coefficients from some element of $C^b$ has measure 0. Thus, $[C^b]$ is a countable union of measure 0 sets, hence it has measure 0 and almost all power series over $\mathcal B_b[[x]]$ are irreducible.
\end{proof}

We now prove \Cref{thm:gen-wirsing-measure}. Our proof parallels Wirsing's original argument to a great extent, but as the authors are not aware of an English translation of Wirsing's result \cite{Wir53}, we reproduce it here for the sake of completeness.

\begin{defn}\label{defn:A(x)}
    For $n\in\N$ and $f = \bigoplus_{k=0}^\infty a_kx^k \in \mathcal N[[x]]$, define $f(n) := \bigoplus_{k=0}^n a_kx^k \in \mathcal N[x]$. 
\end{defn}

First, partition $\mathcal{C}^b$ into three sets $T^b_1,T^b_2,T^b_3$:
\begin{align*}
    T^b_1 & \ :=\  \left\{h:h=f\otimes g \textrm{ with } 2\leq \abs{g_1}<\infty\right\}\\
    T^b_2 &\ :=\  \left\{h:h=f\otimes g \textrm{ with } \liminf_{n\rightarrow\infty}\frac{\abs{f_1(n)}+\abs{g_1(n)}}{n}<\frac{1}{5} \textrm{ and } \abs{f_1}=\infty=\abs{g_1}\right\}\\
    T^b_3 & \ :=\  \left\{ h:h=f\otimes g \textrm{ with } \liminf_{n\rightarrow\infty}\frac{\abs{f_1(n)}+\abs{g_1(n)}}{n}\geq\frac{1}{5} \textrm{ and } \abs{f_1}=\infty=\abs{g_1} \right\}.
    \end{align*}
Since $T^b_1 \cup T^b_2 \cup T^b_2 = C^b$, it suffices to show that $\mathcal L(T^b_1) = \mathcal L(T^b_2) = \mathcal L(T^b_3) = 0$.  In proving that the measures of $T^b_1$ and $T^b_3$ are 0, we rely extensively on the following idea.

\begin{defn}\label{defn:normal}
    A number $\lambda\in\R$ is \textit{normal} in base $b$ if the base $b$ representation of $\lambda$ contains an equal proportion of each finite sequence of digits base $b$.  That is, if for all positive integers $n$, all possible strings of $n$ digits  have density $b^{-n}$ in the base $b$ representation. 
    
    More formally, let $s = (\delta_1, \dots, \delta_k)$ be a string of digits in $\{0, \dots, b-1\}$. Fix a real number $\lambda$ and let $N_\lambda(n, s)$ denote the number of occurences of the string $s$ in the first $n$ digits of the base-$b$ expansion of $\lambda$. Then the following holds:
    \[
    \lim_{n\to\infty} \frac{N_\lambda(n,s)}{n}\ =\ b^{-k}.
    \]
    An equivalent formulation of this is the following: let $Z \subset \{0, \dots, b-1\}^k$ and let $N_\lambda(n,Z) = \sum_{s\in Z} N(n,s)$. Then
    \begin{equation}\label{eqn:normality-expression}
        \lim_{n\to\infty}\frac{N(n,Z)}{n} = |Z|b^{-k}.
    \end{equation}
\end{defn}

\begin{theorem} \label{thm:normal}
(Borel, 1909 for base 2; Wirsing, 1953 for the general case) 
    For any $b\geq2$, almost every $\lambda\in\R$ is normal base $b$. Consequently, almost every $\lambda \in \R$ is absolutely normal, that is, normal in every base.
\end{theorem}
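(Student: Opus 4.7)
The plan is to derive the base-$b$ normality statement from the Strong Law of Large Numbers (SLLN) applied to i.i.d.\ digit blocks, and then extend to absolute normality via a countable union. The starting point would be the observation that, under Lebesgue measure on $[0,1]$, the base-$b$ digits $d_1(\lambda), d_2(\lambda), \ldots$ of a random $\lambda$ form an i.i.d.\ sequence uniform on $\{0, 1, \ldots, b-1\}$. Fixing a finite string $s = (\delta_1, \ldots, \delta_k)$, I would introduce the indicator random variables $X_i := \mathbf{1}\{(d_i,\ldots,d_{i+k-1}) = s\}$ and observe that they are $(k-1)$-dependent rather than independent, so SLLN does not apply to $(X_i)$ directly.

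To get around this, I would partition the starting positions $\{1, 2, \ldots, n-k+1\}$ according to their residue modulo $k$. Within a fixed residue class $r$, the nonoverlapping length-$k$ blocks $(d_{r+jk+1}, \ldots, d_{r+(j+1)k})$ for $j \geq 0$ are i.i.d.\ uniform on $\{0,\ldots,b-1\}^k$, so by the SLLN the fraction of such blocks equal to $s$ converges almost surely to $b^{-k}$. Summing the counts across the $k$ residue classes would recover $N_\lambda(n, s)$ exactly, since each admissible starting position lies in precisely one residue class, yielding $N_\lambda(n,s)/n \to b^{-k}$ almost surely. Because there are only countably many finite strings over $\{0, \ldots, b-1\}$, the bad set where the limit fails for some $s$ is a countable union of Lebesgue-null sets, hence null; this gives base-$b$ normality for almost every $\lambda$. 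A further countable union over $b \in \{2, 3, 4, \ldots\}$ then delivers absolute normality.

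The main obstacle I expect is bookkeeping: verifying that the residue decomposition neither double-counts nor misses occurrences of $s$, and checking that the boundary positions within $k-1$ of $n$ contribute only $O(k/n) \to 0$ so that the pointwise convergence carries over to the full count $N_\lambda(n,s)$. An alternative route would be a direct Borel-Cantelli argument using fourth-moment bounds on $\sum_{i\leq n} X_i - n b^{-k}$, but that approach demands explicit variance estimates for a dependent sum, whereas the residue-partition reduction keeps all the probabilistic work inside the classical i.i.d.\ SLLN.
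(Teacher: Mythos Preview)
The paper does not actually prove this theorem; it is stated with attribution to Borel (1909) and Wirsing (1953) and then invoked as a black box in the proofs of \Cref{lem: t1} and \Cref{lem: t3}. So there is no proof in the paper to compare your proposal against.

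Your argument is nonetheless correct and is the standard modern proof. The residue-class decomposition modulo $k$ is exactly the right device to reduce the $(k-1)$-dependent sequence $(X_i)$ to $k$ genuinely i.i.d.\ Bernoulli$(b^{-k})$ sequences, each of length $\lfloor n/k\rfloor + O(1)$; applying SLLN to each and summing recovers $N_\lambda(n,s)/n \to b^{-k}$ almost surely, and the boundary correction you anticipate is indeed $O(k/n)$. Two small points worth recording if you write this out in full: first, the claim that the digits are i.i.d.\ uniform under Lebesgue measure holds only off the countable (hence null) set of $\lambda$ with two base-$b$ expansions; second, normality depends only on the fractional part of $\lambda$, so the passage from $[0,1]$ to all of $\R$ is immediate. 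The countable-union steps over strings $s$ and over bases $b$ are fine as written.
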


We now state an important lemma with an elementary proof.

\begin{lemma}\label{lemma:summand-inequality}
    If $h = f\otimes g$, then $\bigoplus_{k=0}^n \alpha_k\otimes \beta_{n - k} = \gamma_n$.
\end{lemma}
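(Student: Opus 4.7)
The plan is to unwind the definition of multiplication in the semiring $\mathcal{N}[[x]]$ of formal power series. In any commutative semiring $R$, the product of two power series in $R[[x]]$ is determined by the Cauchy-convolution formula: the coefficient of $x^n$ in the product is obtained by summing, over all pairs $(i, j)$ with $i + j = n$, the products of the corresponding coefficients. In our $(\max, \min)$ setting this reads $(f \otimes g)_n = \bigoplus_{i+j=n} (\alpha_i \otimes \beta_j)$. Since for each fixed $n$ this index set is finite, the formula is well defined even when $f$ and $g$ are genuine power series; no convergence question for infinite $\oplus$-sums ever arises in the computation of a single coefficient.

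Reparametrizing by $k = i$ (so $j = n-k$) converts the convolution into $(f \otimes g)_n = \bigoplus_{k=0}^n \alpha_k \otimes \beta_{n-k}$. The hypothesis $h = f \otimes g$ asserts coefficient-by-coefficient that $\gamma_n = (f \otimes g)_n$ for every $n \geq 0$, so chaining the two identities yields the claimed equality.

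There is no substantive obstacle: the lemma is essentially a restatement of how $\otimes$ is defined on $\mathcal{N}[[x]]$. The only thing meriting mention is that multiplication on $\mathcal{N}[[x]]$ really is given by the Cauchy-convolution formula, which follows from the standard construction of a power-series semiring over any commutative semiring together with the distributivity of $\min$ over $\max$ (the defining axiom that makes $\mathcal{N}$ a semiring in the first place). If one wishes to be thorough, this can be verified directly by formally expanding $f \otimes g$, invoking distributivity only finitely many times per coefficient, and collecting like powers of $x$.
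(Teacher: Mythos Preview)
Your proposal is correct and follows essentially the same approach as the paper: both arguments simply expand $f\otimes g$ via the Cauchy-convolution formula for power series over a semiring and read off the $n$th coefficient.
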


\begin{proof} To elucidate this fact, all we need to do is rewrite the product $f\otimes g$:
\[
f\otimes g\ =\ \bigoplus_{i=0}^\infty \alpha_ix^i \bigoplus_{j=0}^\infty \beta_jx^j\ =\ \bigoplus_{n=0}^\infty \left(\bigoplus_{k=0}^n \alpha_k\otimes \beta_{n-k}\right)x^n\ =\  \bigoplus_{n=0}^\infty \gamma_nx^n.
\]
\end{proof}

\begin{lemma} \label{lem: t1} We have
    $m(T^b_1) = 0$.
\end{lemma}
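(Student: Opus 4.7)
The plan is to cover $T^b_1$ by countably many image sets and show each has $m$-measure zero. Every $g$ with $2 \leq |g_1| < \infty$ is a polynomial of finite support with coefficients in $\{0, 1, \ldots, b-1\}$, so the collection of such $g$ is countable and
\[
T^b_1\ =\ \bigcup_{g\,:\,2 \leq |g_1| < \infty} \{ f\otimes g : f \in \mathcal B_b[[x]] \}.
\]
It therefore suffices to prove $m(\{f \otimes g : f \in \mathcal B_b[[x]]\}) = 0$ for each fixed such $g$.

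The main step is to exhibit a forbidden pattern. Fix $g$ with support positions $k_1 < \cdots < k_r$ (with $r \geq 2$) and nonzero coefficients $\beta_{k_i} \in \{1, \ldots, b-1\}$, and set $\Delta := \{k_{i'} - k_i : 1 \leq i \neq i' \leq r\}$. I claim that no $h = f \otimes g$ admits a sufficiently large index $n$ satisfying both $\gamma_n \geq 1$ and $\gamma_{n+\delta} = 0$ for every $\delta \in \Delta$. Indeed, by \cref{lemma:summand-inequality}, $\gamma_n \geq 1$ forces some $\alpha_{n-k_i} \geq 1$; then for every $i' \neq i$ the bound $\gamma_{n+(k_{i'}-k_i)} \geq \min(\alpha_{n-k_i}, \beta_{k_{i'}}) \geq 1$ contradicts the assumed zero at position $n + (k_{i'} - k_i)$.

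The concluding step converts this combinatorial fact into a measure statement via \cref{thm:normal}. The pattern prescribes values at the $|\Delta|+1$ positions $\{0\} \cup \Delta$ inside a window of length $W := \max\Delta - \min\Delta + 1$, defining a set $Z \subset \{0, \ldots, b-1\}^W$ of size $(b-1) \cdot b^{W-|\Delta|-1}$. By \cref{thm:normal} and \cref{eqn:normality-expression}, $m$-almost every $h$ contains length-$W$ substrings in $Z$ with positive asymptotic density $|Z|/b^W = (b-1)/b^{|\Delta|+1}$, while no $h = f \otimes g$ contains any such substring. Hence $\{f \otimes g : f \in \mathcal B_b[[x]]\}$ lies inside the null set of non-normal numbers, so $m(\{f \otimes g : f \in \mathcal B_b[[x]]\}) = 0$; taking the countable union over $g$ yields $m(T^b_1) = 0$.

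The main difficulty should be correctly identifying the forbidden pattern and tracking the window in which it sits; once that is set up, both the contradiction from \cref{lemma:summand-inequality} and the appeal to normality are concise.
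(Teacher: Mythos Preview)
Your proof is correct and follows essentially the same approach as the paper: show that every $h\in T^b_1$ fails to be normal by exhibiting a forbidden digit pattern determined by the finite factor $g$, then invoke Borel's theorem (\Cref{thm:normal}). The only cosmetic difference is that the paper uses the single forbidden string $0^{\deg g+1}\,1\,0^{\deg g+1}$, whereas you work with the more refined pattern set $Z$ built from the difference set $\Delta$ of the support of $g$; both are straightforward applications of \Cref{lemma:summand-inequality} and \Cref{eqn:normality-expression}.
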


\begin{proof}
    We show that no element of $T^b_1$ is normal, whence the result follows.  Specifically, we claim that the following sequence of digits can never occur in $\rho_b(h)$ for any $h = f\otimes g\in T^b_1$:
    \begin{align}\label{eqn:digit-string}
        \underbrace{00\dots0}_{\deg g+1}1 \underbrace{00\dots0}_{\deg g+1}.
    \end{align}
    Let $f_1 = \bigoplus_{k=0}^\infty \alpha_kx^k, g_1 = \bigoplus_{k=0}^{\deg g_1} \beta_kx^k, h_1 = \bigoplus_{k=0}^\infty \gamma_kx^k$. We can write
    \[
    h_1\ =\ g_1\otimes f_1\ =\ \bigoplus_{k=0}^{\deg g} \beta_kx^k\otimes f_1.
    \]
    If $\gamma_k = 1$, then by \Cref{lemma:summand-inequality} there exist $i, j$ such that $\alpha_i = \beta_j = 1$ and $i+j = k$. Since $g_1$ is not a monomial, there exists another index $j'\neq j$ such that $\beta_{j'} = 1$. Then by \Cref{lemma:summand-inequality}: $1\leq \gamma_{i+j'}$ and $\gamma_{i+j'}' = 1$. The gap between the two indices $i+j, i+j'$ is at most $\deg g_1$ (but either index can come first), thus $\rho_b(h_1)$ does not have a ``1'' without another ``1'' at most $\deg g$ indices away. Thus the string \Cref{eqn:digit-string} does not occur in $\rho_b(h_1)$. 
\end{proof}

\begin{lemma} \label{lem: t2}
   We have $m(T^b_2) = 0$.
\end{lemma}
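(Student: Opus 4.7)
The plan is to apply the Borel-Cantelli lemma after reducing the liminf condition to countably many "sparse at $n$" events. Since $\liminf_n x_n < 1/5$ holds iff some $k \ge 6$ admits infinitely many $n$ with $x_n < 1/5 - 1/k$, I would define
\[
B_{n,k}\ :=\ \{h \in \mathcal B_b[[x]] : h = f\otimes g \text{ for some } f, g \text{ with } |f_1(n)| + |g_1(n)| < (1/5 - 1/k)n\},
\]
so that $T^b_2 \subseteq \bigcup_{k \ge 6} \limsup_{n} B_{n,k}$. By countable subadditivity it then suffices to prove $\sum_{n} m(B_{n,k}) < \infty$ for each fixed $k$, after which Borel-Cantelli yields $m(\limsup_n B_{n,k}) = 0$, and a countable union over $k$ gives $m(T^b_2) = 0$.

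For the measure bound, \Cref{lemma:summand-inequality} implies that the first $n+1$ coefficients of $h = f \otimes g$ are determined by $f(n)$ and $g(n)$. So the number of distinct $h(n)$ arising in $B_{n,k}$ is at most the number of pairs $(f(n), g(n)) \in \mathcal B_b[x]^2$ of degree $\le n$ satisfying the support constraint. Grouping pairs by total support size $K := |f_1(n)| + |g_1(n)|$, each contributing $\binom{n+1}{k_1}(b-1)^{k_1}\binom{n+1}{k_2}(b-1)^{k_2}$, and applying Vandermonde's identity gives the count
\[
\sum_{K < (1/5 - 1/k)n} \binom{2n+2}{K}(b-1)^{K}.
\]
Each choice of $h(n)$ determines a cylinder in $[0,b]$ of $m$-measure $b^{-n-1}$, so $m(B_{n,k})$ is at most $b^{-n-1}$ times this sum.

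The hard part is the decay estimate: the factor $b^{n+1}$ must be beaten. Viewing the sum as $b^{2n+2} \cdot \Pr[\mathrm{Bin}(2n+2, (b-1)/b) < (1/5 - 1/k)n]$ and applying a Chernoff/entropy-type bound (the summand is largest at $K \approx \theta_k n$ since this is still well below the mean $(b-1)(2n+2)/b \ge n+1$), one gets, up to polynomial factors, a bound of $e^{(2n+2) H(\theta_k/2)}(b-1)^{\theta_k n}$, where $\theta_k := 1/5 - 1/k$ and $H$ is the natural-log binary entropy. The decisive inequality is then $2H(1/10) + (1/5)\log(b-1) < \log b$ for every $b \ge 2$. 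A direct calculation gives $2H(1/10) \approx 0.651$ while $\log 2 \approx 0.693$, so the inequality holds at $b = 2$ with a positive margin; differentiating $\log b - (1/5)\log(b-1) - 2H(1/10)$ in $b$ yields $(4b-5)/(5b(b-1)) > 0$ on $b \ge 2$, so the inequality persists for all $b \ge 2$. This positive margin in the leading exponent is precisely what motivates the constant $1/5$ in Wirsing's argument. With $m(B_{n,k}) \lesssim r^n$ for some $r < 1$ in hand, summability and Borel-Cantelli complete the proof.
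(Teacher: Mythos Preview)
Your proposal is correct and follows essentially the same route as the paper: reduce to finite events, count truncated pairs $(f(n),g(n))$ by total support size, bound the resulting binomial tail, verify exponential decay in $n$, and invoke Borel--Cantelli. Two inessential differences are worth noting. First, your auxiliary parameter $k$ (passing to thresholds $1/5-1/k$) is unnecessary: $\liminf_n x_n<1/5$ already forces $x_n<1/5$ for infinitely many $n$, so the paper works directly with the single family $T^b_2(n)=\{h=f\otimes g:\ |f_1(n)|+|g_1(n)|<n/5\}$ and shows $\sum_n m(T^b_2(n))<\infty$. Second, for the tail the paper uses the cruder bound $\binom{N}{K}\le (Ne/K)^K$ rather than an entropy/Chernoff estimate, obtaining $m(T^b_2(n))\le n\bigl((10e(b-1))^{1/5}/b\bigr)^n$; the base $(10e(b-1))^{1/5}/b<1$ for $b\ge2$ is exactly your inequality $2H(1/10)+\tfrac15\log(b-1)<\log b$ in slightly weaker form. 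Your Vandermonde step is the explicit justification of the paper's one-line count ``distribute $k$ nonzero coefficients among $2n+2$ slots.''
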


\begin{proof}
    We begin by defining a finite counterpart to $T^b_2$:
    \begin{equation*}
        T^b_2(n) \ :=\ \left\{\rho_b(h): h=f\otimes g:\frac{\abs{f_1(n)}+\abs{g_1(n)}}{n}<\frac{1}{5} \textrm{ and } \abs{f_1}=\infty=\abs{g_1}\right\}.
    \end{equation*}
    Notice that
    \begin{equation*}                                   
        T^b_2 \subseteq \limsup (\{T^b_2(n)\})\ =\ \bigcap_{N\geq1}\bigcup_{n\geq N} T^b_2(n).
    \end{equation*}
    By the Borel-Cantelli Lemma, we know that if $$\sum_{n=1}^{\infty}m(T^b_2(n))<\infty,$$ then 
    $$m\left(\limsup_{n \to \infty}(T^b_2(n))\right) = m(T^b_2) = 0.$$
    As such, it suffices to show that $\sum_{n=1}^{\infty}m(T^b_2(n))<\infty$.
    
    Fix an integer $k$ and consider all possible $f$ and $h$ such that $\abs{f_1(n)} + \abs{g_1(n)} = k$. There are $\binom{2n+2}{k}$ possibilities for $f_1(n)$  and $g_1(n)$: each has $n+1$ coefficients, and we distribute $k$ nonzero coefficients among them. Additionally, for a given choice of $f_1(n)$ and $g_1(n)$, there are $(b-1)^k$ polynomials $f(n)$ and $g(n)$ since each 1 coefficient of $f_1$ or $g_1$ can correspond to any value in $\{1, \dots, b-1\}$. Thus, for a given $k$, there are at most $(b-1)^k\binom{2n+2}{k}$ possibilities for $f(n) \otimes g(n)$. Therefore, $T^b_2(n)$ is a subset of a union of  at most $\sum_{0\leq k\leq\frac{n}{5}}(b-1)^k\binom{2n+2}{k}$ intervals, each of length $b^{-n}$.
    
    We then compute
    \begin{align*}
        m(T^b_2(n)) & \ \leq\ \frac{1}{b^n}\sum_{0\leq k\leq\frac{n}{5}}(b-1)^k\binom{2n+2}{k} \\ &\ \le \  \frac{n}{5b^n}(b-1)^{n/5}\binom{2n+2}{\floor{n/5}} \\
        &\ \le \  \frac{n}{b^n}(b-1)^{n/5}\binom{2n}{\floor{n/5}} \\
        &\ \le \  \frac{n}{b^n}(b-1)^{n/5}\left(\frac{2ne}{n/5}\right)^{n/5} \\
        &\ \le \  \frac{n}{b^n}(10e(b-1))^{n/5} \\
        &\ \le \  n\left(\frac{1.94(b-1)^{1/5}}{b}\right)^n.
    \end{align*}
    Notice that $\frac{1.94(b-1)^{1/5}}{b} < 1$ for $b \geq 2$. Hence, the sum $\sum_{n=1}^\infty m(T^b_2(n))$ converges, so $m(T^b_2) = 0$.
    
\end{proof}

\begin{lemma} \label{lem: t3}
    We have $m(T^b_3) = 0$.
\end{lemma}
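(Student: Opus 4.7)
The plan is to apply the Borel--Cantelli Lemma in the spirit of \Cref{lem: t2}. First, I would define
\[
T^b_3(n) \ :=\ \bigl\{ \rho_b(h) : h = f \otimes g,\ \abs{f_1(n)} + \abs{g_1(n)} \geq n/6,\ \abs{f_1(n)}, \abs{g_1(n)} \geq 2 \bigr\},
\]
and observe that $T^b_3 \subseteq \liminf_n T^b_3(n) \subseteq \limsup_n T^b_3(n)$: since $\abs{f_1} = \abs{g_1} = \infty$, the truncated supports $\abs{f_1(n)}, \abs{g_1(n)}$ both reach $2$ eventually, and $\liminf_n (\abs{f_1(n)} + \abs{g_1(n)})/n \geq 1/5 > 1/6$ forces the density bound to hold eventually. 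By Borel--Cantelli, it then suffices to show $\sum_n m(T^b_3(n)) < \infty$.

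To bound $m(T^b_3(n))$, I would count the distinct truncations $h(n) \in \mathcal{B}_b^{n+1}$ that can arise as the degree-$n$ truncation of a product $f(n) \otimes g(n)$ satisfying the conditions above. Unlike in \Cref{lem: t2}, the density condition does not restrict the number of factor pairs --- most pairs $(f(n), g(n))$ already satisfy $\abs{f_1(n)} + \abs{g_1(n)} \geq n/6$. Instead, the idea is to leverage the fact that pairs with substantial supports (by pigeonhole, at least one of $\abs{f_1(n)}, \abs{g_1(n)}$ is $\gtrsim n/12$) produce few distinct products. The key tool here is \Cref{lemma:close-factors-2}, which, combined with Hoeffding's inequality as in the proof of \Cref{prop:quantitative-gen-Shitov}, yields an upper bound on the number of admissible truncations by summing over the possible sizes $\abs{f_1(n)}, \abs{g_1(n)}$ in the high-density regime.

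The main obstacle is ensuring summability of $\sum_n m(T^b_3(n))$. The bounds in \Cref{prop:quantitative-gen-Shitov} were tuned so that each error term vanishes individually, but they are not necessarily summable as stated (for instance, with $v = 3\log_2 n$ the third summand is only $\Theta(1/n)$). To achieve summability, I would sharpen those parameters --- taking, for example, $d \sim \sqrt{n}\,\log n$ and $v \sim C \log_2 n$ with a sufficiently large constant $C$ --- so that each of the four error terms decays at least like $1/n^{1+\delta}$ for some $\delta > 0$. Once $\sum_n m(T^b_3(n)) < \infty$ is established, Borel--Cantelli yields $m(T^b_3) \leq m(\limsup_n T^b_3(n)) = 0$, completing the proof.
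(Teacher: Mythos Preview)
Your Borel--Cantelli plan cannot work as stated, and the obstruction is not a matter of parameter tuning. The upper bound you propose---covering $T^b_3(n)$ by the cylinder of all $h$ whose truncation $h(n)$ arises as $(f(n)\otimes g(n))(n)$ for some pair satisfying your conditions---is essentially trivial: a constant fraction of all $p\in\mathcal B_b^{n+1}$ are realized this way. For instance, whenever $p_n\ge p_0$ and $\abs{p_1}\ge n/6$, taking $f=p$ and $g=(b-1)(1\oplus x^n)$ gives $(f\otimes g)(n)=p$ with $\abs{f_1(n)}+\abs{g_1(n)}\ge n/6$ and $\abs{f_1(n)},\abs{g_1(n)}\ge 2$. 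Hence your cylinder cover has measure bounded below by a positive constant, and $\sum_n m(T^b_3(n))$ diverges regardless of how $d,v$ are chosen. You also misread \Cref{lemma:close-factors-2}: it does not assert that ``pairs with substantial supports produce few distinct products''; its hypothesis is the tightness condition $\abs{f\otimes g}\le\abs{f}+\abs{g}+d$ on the full product of two polynomials of \emph{complementary} degrees summing to $n-1$. Neither feature is present here---$f(n)$ and $g(n)$ both have degree up to $n$, their product has degree up to $2n$, and no tightness bound on the truncated product is available---so the lemma gives no control over truncations of power-series products.

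The paper handles $T^b_3$ by an entirely different mechanism: it shows directly that $\rho_b(h)$ is not normal for any $h\in T^b_3$, and then invokes \Cref{thm:normal}. Since one factor, say $f$, satisfies $\liminf_n\abs{f_1(n)}/n\ge 1/10$, one chooses $k$ with $((b-1)/b)^k<1/10$ and then $r$ with $\abs{g_1(r-1)}=k$ (possible because $\abs{g_1}=\infty$). Let $Z$ be the set of length-$r$ digit strings whose support contains that of $g_1(r-1)$; then $\abs{Z}b^{-r}=((b-1)/b)^k<1/10$. But every index $s$ with $\alpha_s\ne 0$ forces the block $(\gamma_s,\dots,\gamma_{s+r-1})$ to lie in $Z$ (by \Cref{lemma:summand-inequality}), so these blocks occur in $\rho_b(h)$ with lower density at least $1/10$, contradicting \eqref{eqn:normality-expression}. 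This pointwise normality argument is the idea you are missing: the large-support regime of $T^b_3$ is dual to $T^b_2$ and is not amenable to the same counting approach.
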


\begin{proof}
As in the case of $T^b_1$, we will show that no element of $T^b_3$ is normal, from which the result will follow. 

Without loss of generality, we know that $\liminf_{n \to \infty}\frac{\abs{f_1(n)}}{n} \geq \frac{1}{10}$. Let $k$ be a positive integer such that
\begin{equation*}
   \left(\frac{b-1}{b}\right)^k\ <\ \frac{1}{10}.
\end{equation*} 
Pick a positive integer $r$ such that $|g_1(r-1)| = k$. This is equivalent to choosing $r$ such that $\rho_b(g_1(r-1))$ has exactly $k$ ones. Let $Z$ denote the set of degree $r-1$ polynomials in $\sigma\in \mathcal B_b[x]$ such that $\sigma_1 \oplus g_1 = \sigma_1$. In other words, $Z$ is the set of degree $r-1$ polynomials of $\sigma\in \mathcal B_b[x]$ such that $\beta_i\neq 0\implies \delta_i \neq 0$. We can compute $\abs{Z}$ using a counting argument: If $\beta_i \neq 0$, then $\delta_i \in \{1,\dots, b-1\}$, otherwise $\delta_i\in \{0, \dots, b-1\}$. As $|g_1| = k$ and $\sigma$ has $r$ coefficients, there are  $(b-1)^kb^{r-k}$ possible choices for $\sigma$. 

If $\rho_b(h)$ is normal, we expect the digit strings in $\rho_b(Z)$ to occur at a frequency of $\frac{(b-1)^kb^{r-k}}{b^r} = \left(\frac{b-1}{b}\right)^k$ in $\rho_b(h)$. We show that they instead occur at a frequency of at least $\frac{1}{10}$, from which it follows that $\rho_b(h)$ is not normal. 

Suppose $\alpha'_s = 1$. Then from \Cref{lemma:summand-inequality}, it follows that:
\[
\left(\gamma'_s x^s \oplus \cdots \oplus \gamma'_{s+r-1}x^{s+r-1}\right) \oplus \left(\alpha'_s x^s \otimes g_1(r-1)\right) = \left(\alpha'_s x^s \otimes g_1(r-1)\right) .
\]
Thus $\gamma_s \oplus \cdots \oplus \gamma_{s+r-1}x^{r-1} \in Z$. This observation allows us to lower-bound the frequency of these strings in $\rho_b(h)$:
\begin{align*}
   \frac{1}{10}\ \le \  \liminf_{n \to \infty} \left( \frac{|f_1(n)|}{n} \right) &\ \le \   \liminf_{n \to \infty} \left( \frac{N_{\rho(h)}(n+r-1, Z)}{n} \right) = \liminf_{n \to \infty} \left( \frac{N_{\rho(h)}(n, Z)}{n} \right).
\end{align*}
The above contradicts \Cref{eqn:normality-expression}, thus $h$ is not normal.
\end{proof}

We now prove \Cref{thm:gen-wirsing-measure}, from which \Cref{thm:gen-wirsing} is a corollary.
\begin{proof}
By construction, $\mathcal C^b = T^b_1\cup T^b_2\cup T^b_3$. As a consequence of \Cref{lem: t1}, \Cref{lem: t2}, and \Cref{lem: t3}, we have
\[ m(\mathcal C^b) \ \le \  m(T^b_1) + m(T^b_2) + m(T^b_3) = 0.\]

\end{proof}
\printbibliography
\end{document}